\documentclass[a4paper,12pt]{amsart}
\usepackage{amssymb, enumerate,psfrag,graphicx,amsfonts,amsrefs,amsthm,mathrsfs,amsmath,amscd,version,graphicx}
\usepackage{tikz,cleveref,caption}
\usepackage[e]{esvect}
\usetikzlibrary{graphs,graphs.standard}

\usepackage{xcolor}
\usepackage{tikz-cd}
\usepackage{tikz}
\usetikzlibrary{arrows}
\usetikzlibrary{decorations.pathreplacing}
\tikzset{negated/.style={
        decoration={markings,
            mark= at position 0.5 with {
                \node[transform shape] (tempnode) {$\backslash$};
            }
        },
        postaction={decorate}
    }
}
\tikzset{
    vertex/.style={draw,circle,fill=black,thick, inner sep=1.5pt,minimum size=4pt,
    },
      vtx/.style={draw,circle,thick, inner sep=1.5pt,minimum size=4pt,
    },    edge/.style={},
    dedge/.style = {->,> = latex',thick,bend right=45},
    ddedge/.style = {postaction={decorate,decoration={
        markings,
        mark=at position .72 with {\arrow[#1]{Stealth[length=6pt]}, ultra thick}
      }}}
}
\definecolor{asparagus}{rgb}{0.53, 0.66, 0.42}
\usetikzlibrary{decorations.markings}
\usetikzlibrary{decorations.text}
\Crefname{equation}{Eq.}{Eqs.}
\newcommand{\diag}{\operatorname{diag}}

\newtheorem{theo}{Theorem}[section]
\newtheorem{theorem}[theo]{Theorem}
\newtheorem{lemma}[theo]{Lemma}
\newtheorem{proposition}[theo]{Proposition}
\newtheorem{corollary}[theo]{Corollary}
\newtheorem{definition}[theo]{Definition}

\theoremstyle{definition}
\newtheorem{remark}[theo]{Remark}
\newtheorem{example}[theo]{Example}

\numberwithin{equation}{section}

\begin{document}
\keywords{Signed graphs, Degree similarity, Net-degree similarity, Adjacency matrix, (net-)Laplacian matrix, Local switching, Partial transposition.}

\title[Degree similar and net-degree similar signed graphs]{Degree similar and net-degree similar signed graphs}

\author[M. Cavaleri]{Matteo Cavaleri}
\address{Matteo Cavaleri, Universit\`{a} Niccol\`{o} Cusano, Dipartimento di Ingegneria, Via Don Carlo Gnocchi, 3 00166 Roma, Italia}
\email{matteo.cavaleri@unicusano.it \textrm{(Corresponding Author)}}

\author[A. Donno]{Alfredo Donno}
\address{Alfredo Donno, Universit\`{a}  Niccol\`{o} Cusano, Dipartimento di Ingegneria, Via Don Carlo Gnocchi, 3 00166 Roma, Italia}
\email{alfredo.donno@unicusano.it}

\begin{abstract}
Two graphs are degree similar if their adjacency matrices and their degree
matrices are simultaneously similar, a notion recently introduced by
Godsil and Sun. We extend it to signed graphs in two ways, degree
similarity and net-degree similarity, according to whether the degree or
the net-degree matrix is used. We prove that signed trees are determined
by their degree similarity class, but not by their net-degree similarity
class. We then develop constructions of (net-)degree similar signed graphs,
based on a signed version of Godsil--McKay switching, on partial
transposition and on block extensions. As a byproduct, we obtain infinite families both of pairs of net-degree
   similar unicyclic signed graphs with non-isomorphic underlying graphs, and
   of unicyclic signed graphs, not necessarily regular, which are not determined by
   their degree similarity class. This settles
the net-degree version of a problem of Godsil and Sun on unicyclic
graphs.
\end{abstract}

\maketitle

\begin{center}
{\footnotesize{\bf Mathematics  Subject Classification (2020)}: 05C07, 05C22, 05C50, 05C76.}
\end{center}

\section{Introduction}
In this paper we extend to the setting of signed graphs the notion of \emph{degree similarity},
introduced and investigated in \cite{GS} for unsigned graphs.\\
\indent A central problem in spectral graph theory is to  understand which graphs are
determined by their spectrum. Schwenk \cite{schwenk} showed that almost all
trees are not, and the switching technique of Godsil and McKay \cite{GM} has
become one of the main tools for constructing cospectral graphs. Nevertheless,
van Dam and Haemers \cite{DH} conjectured that almost all graphs are
determined by their spectrum.\\
\indent In this context, Wang, Li, Lu and Xu \cite{WLLX} considered a stronger
notion than cospectrality, and studied graphs determined by their
generalized characteristic polynomial $\det(tI-A+\mu D)$, introduced
in \cite{CDS}, where $A$ and $D$ are the adjacency and degree matrices, respectively; two
graphs sharing this polynomial are cospectral with respect to the adjacency,
the Laplacian and the signless Laplacian matrix, and have the same degree
sequence. In \cite[Section~5]{WLLX} the authors observe that if the
adjacency and degree matrices of two graphs are conjugate by the same
(orthogonal) matrix, then the two graphs have the same generalized
characteristic polynomial, and they ask whether the converse holds.
As observed in \cite{Wan}, a negative answer to this question is already
contained in \cite{DM}.

In \cite{GS}, pairs of graphs whose adjacency matrices and degree matrices are
simultaneously similar were called \emph{degree similar}, and it was shown
that, while there are infinitely many pairs of non-isomorphic trees with the
same generalized characteristic polynomial, no two non-isomorphic trees are
degree similar (using a result of McKay \cite{McK}). In particular, trees
are determined by their degree similarity class, which shows that degree
similarity is a genuinely stronger notion than generalized cospectrality and
is of independent interest.\\
\indent While \cite{GS} provides several tools for constructing degree similar pairs
of graphs, many questions
remain open.
For instance, \cite[Problem~27]{GS} asks whether non-isomorphic unicyclic
graphs can be degree similar. This question is also addressed in
\cite{FX}, where several families of unicyclic graphs are shown to be
determined by their degree similarity class, and a negative answer has
been given in the recent preprint \cite{ZF9oct26}.
 In \cite{FX} the authors  introduce \emph{orthogonally
   degree-similar} graphs (see also \cite{FWZ}), requiring the similarity to be realized by an
   orthogonal matrix, as in the original question of \cite{WLLX}.
   Since adjacency and degree matrices are symmetric, this notion coincides
with degree similarity, also for signed graphs (see Lemma~\ref{lem:orto}),
as was  implicit in \cite{GS}. Degree similarity is also related to the multivariate graph spectra
introduced by Wang and Zhao \cite{BL0} and further studied in \cite{BL1}.

Signed graphs \cite{zasign} have a well-developed spectral theory (see,
for instance, \cite{BCKW,zasbib}), and many classical problems and tools
have a natural signed counterpart. To mention only those relevant for this paper, a signed version of
Godsil--McKay switching was developed in \cite{cos}, partial transposition
was used in \cite{PT} to construct cospectral signed graphs, and spectral
characterization problems for signed graphs were studied, for example, in
\cite{BP,BS}. We refer to \cite{Zaslavsky,zasbib} for further background on
signed graphs.
In general, spectral determination is harder for
signed graphs than for unsigned ones: a single underlying graph carries
exponentially many signatures, and several spectral invariants of unsigned
graphs lose their direct combinatorial
meaning in the presence of negative edges.
For this reason, stronger notions of cospectrality for signed graphs have
also been considered in the literature; see, for instance, \cite{JWZ,GWL} for
the generalized spectrum of signed graphs, and \cite{Hun} for signed graphs
that are cospectral with respect to both the adjacency and the Laplacian
matrix. However, to the best of our knowledge, degree similarity has not
been studied for signed graphs, and this is the subject of the present
paper. \\
\indent A signed graph $\Gamma$ carries two natural notions of vertex degree: the
degree of a vertex in the underlying graph, and its \emph{net-degree}, that
is, the number of positive edges minus the number of negative edges incident
with it. Accordingly, there are two diagonal matrices associated with
$\Gamma$: the degree matrix $D_\Gamma$ and the net-degree matrix
$D_\Gamma^{\pm}$. The notion of degree similarity can therefore be extended
to signed graphs in two ways: \emph{degree similarity}, namely the
simultaneous similarity of the adjacency matrices and of the degree matrices, and \emph{net-degree similarity}, namely the simultaneous
similarity of the adjacency matrices and of the net-degree matrices. When the signed graphs are all-positive, the degree and the net-degree
matrices coincide, and both notions reduce to degree similarity in the sense
of \cite{GS}. We show that degree similarity is invariant under switching, and implies
   cospectrality with respect to both the adjacency and the Laplacian
   matrix; hence it is natural to ask whether it determines a signed graph
   up to switching isomorphism. On the other hand, net-degree similarity
   depends on the chosen representative of the switching class, and implies
   cospectrality with respect to both the adjacency and the net Laplacian
   matrix.

        Our main contributions are the following. We prove that every signed
graph which is degree similar to a signed tree is itself a signed tree;
combined with the result of \cite{GS} on unsigned trees, this implies that
signed trees are determined by their degree similarity class. While the
first statement is immediate for unsigned graphs, it is not for signed
ones, and indeed it already fails for unicyclic signed graphs: a balanced
cycle of even length is degree similar to the disjoint union of two
signed cycles of half length, one balanced and one unbalanced. In
particular, in contrast with the unsigned case (see \cite{ZF9oct26}), unicyclic signed graphs need not be determined by their degree
similarity class. We then develop a set of tools for constructing
pairs of (net-)degree-similar signed graphs, obtained by adapting to this
setting Godsil--McKay switching, partial transposition, and block
constructions; when applied to the unsigned case, they include and extend several
constructions from \cite{GS}. As a byproduct of these constructions, we obtain infinite families of
unicyclic signed graphs with properties which have no counterpart among
unsigned graphs. For instance, the signed local switching produces, already on  nine
vertices, a pair of net-degree similar unicyclic signed graphs with non-isomorphic
underlying graphs, answering the net-degree
analogue of \cite[Problem~27]{GS}. Moreover, the rooted product turns this pair, as well as the
degree similar pair of cycles mentioned above, into infinite families,
yielding in particular unicyclic signed graphs, also non-regular, which
are not determined by their degree similarity class.

\noindent In  Section~\ref{sec:2} we recall some preliminaries on signed graphs.\\
In  Section~\ref{sec:3} we formally introduce degree similarity and
net-degree similarity, and we give a characterization of both notions in
terms of traces of words in the adjacency and (net-)degree matrices
(Proposition~\ref{prop:specht}); it follows that (net-)degree similar signed graphs have the same generalized Zagreb (net)-index.
These criteria are useful to certify that two signed graphs are not (net-)degree
similar, as we do for small pairs of signed graphs with the same
generalized characteristic polynomial (Example~\ref{exa:mu7}).
 We show that there exist
non-isomorphic trees admitting signatures that are net-degree similar
(Example~\ref{exa:-1}), whereas signed trees are determined by their
degree similarity class (Theorem~\ref{theo:tree}), and that unicyclic
signed graphs need not be (Example~\ref{ex:uni}).\\
 In Section~\ref{sec:4} we introduce additional conditions on the signed
version of Godsil--McKay switching developed in \cite{cos}, which ensure
that not only the adjacency spectrum, but also (net-)degree similarity is
preserved. In the unsigned case, our conditions extend the GM$^*$
property of Haemers and Spence~\cite{HS}, and include the construction
of~\cite[Lemma~9]{GS} as a strict special case (see
Subsection~\ref{sub:4.4}). In the signed setting, the construction produces, already on nine
vertices, net-degree similar unicyclic signed graphs with non-isomorphic
underlying graphs (Example~\ref{exa:uni}).\\
In {Section~\ref{sec:5}}, the partial transposition techniques
introduced in \cite{PT} to construct cospectral signed graphs are refined so
as to preserve the (net-)degree similarity class. In particular,  many of the families of cospectral pairs given in \cite{PT}
are in fact (net-)degree similar.\\
In  Section~\ref{sec:6}, we develop tools for producing new pairs of
(net-)degree-similar signed graphs from a given one, whose adjacency
matrices are used as blocks of the adjacency matrices of the new pairs. We obtain several constructions,
which recover, for instance, some of those given in \cite[Section~8]{GS}.
In particular, (net-)degree similarity is preserved by rooted products,
provided that the rooted signed graphs attached to vertices of the same
(net-)degree are the same; applied to the pairs of Examples~\ref{ex:uni}
and~\ref{exa:uni}, these constructions yield infinitely many unicyclic signed graphs,
also non-regular, which are not determined by their degree similarity
class, as well as infinitely many pairs of net-degree similar unicyclic signed
graphs with non-isomorphic underlying graphs (Example~\ref{exa:rooted}).

\section{Preliminaries}\label{sec:2}
A \emph{signed graph} is a pair  $\Gamma=(G, \sigma)$,
where $G= (V_G, E_G)$ is a graph (which, throughout this paper, will always be assumed simple), with vertex set $V_G$ and edge set $E_G$,
called the \emph{underlying graph}, and $\sigma \colon E_G \to \{\pm 1\}$ is a \emph{signature},
assigning a positive or negative sign to each edge.
Two vertices $v,w\in V_G$ are adjacent, and we write $v\sim w$, if $\{v,w\}\in E_G$. If this is the case, we say that the vertices $v$ and $w$ are endpoints of the edge $\{v,w\}$ and that the vertex $v$ (as well as $w$) and the edge $\{v,w\}$ are incident.
With a slight abuse of notation, we will also write $V_\Gamma$ and
$E_\Gamma$ for $V_G$ and $E_G$, respectively.
For a vertex $v\in V_\Gamma$, the \emph{neighborhood} of $v$ is the set
$N(v)=\{w\in V_G : v\sim w\}$, and the \emph{degree} of $v$ is
$d_\Gamma(v)=|N(v)|$.
We also define
$N^+(v)=\{ w \in N(v) : \sigma(v,w) = 1 \}$ and
$d^+_\Gamma(v) = |N^+(v)|$;
similarly, we define
$N^-(v)=\{ w \in N(v) : \sigma(v,w) = -1 \}$ and
$d^-_\Gamma(v) = |N^-(v)|$. Finally,
 we set $d^\pm_\Gamma(v) = d^+_\Gamma(v) - d^-_\Gamma(v)$,
which is called the \emph{net-degree} of the vertex $v$ in $\Gamma$.
When it is clear from the context, we will omit the subscript $\Gamma$.
When we write $d_\Gamma^{(\pm)}(v)$, in words \emph{(net-)degree} of $v$, we simultaneously refer to the degree
$d_\Gamma(v)$ and the net-degree $d_\Gamma^\pm(v)$. The signed graph $\Gamma$ is \emph{$k$-(net-)regular} if
$d^{(\pm)}_\Gamma(v)=k$ for every $v\in V_\Gamma$, and \emph{(net-)regular}
if it is $k$-(net-)regular for some $k$. When the signature $\sigma$ is all-positive, we will often identify the signed
graph $(G,\sigma)$ with its underlying graph $G$.
Two signed graphs $\Gamma_1=(G_1,\sigma_1)$ and $\Gamma_2=(G_2,\sigma_2)$
are isomorphic if there exists an isomorphism
$\phi\colon G_1\to G_2$ of their underlying graphs, such that $\sigma_2(\phi(u),\phi(v))=\sigma_1(u,v)$
for every edge $\{u,v\}$ of $G_1$; such a $\phi$ is called an isomorphism
from $\Gamma_1$ to $\Gamma_2$.
Two signatures $\sigma$ and $\sigma'$ on the same underlying graph $G$ are
said to be \emph{switching equivalent} if there is a subset $W\subset V_G$ such that $\sigma'$ is obtained from
$\sigma$ by reversing the sign of every edge with exactly one endpoint in
$W$. In this case, we will also say that the signed graphs $(G,\sigma)$ and
$(G,\sigma')$ are switching equivalent.
Two signed graphs $\Gamma_1$ and $\Gamma_2$ are
\emph{switching isomorphic} if there exists a signed graph $\Gamma_1'$ that is
 switching
equivalent to $\Gamma_1$  and isomorphic to
$\Gamma_2$.
A signature $\sigma$, and by extension the signed graph $\Gamma=(G, \sigma)$,
is called \emph{balanced} (see \cite{Harary}) if   every closed walk  in $\Gamma$ contains an even number of negative edges, or equivalently, if
 $\sigma$ is switching equivalent to the all-positive signature.

 We denote by $I_n$ and $J_n$ the $n\times n$ identity matrix and the
$n\times n$ all-ones matrix, respectively. When the context is clear, we will omit the subscript and simply write $I$
and $J$. A matrix $M$ is said to be \emph{block diagonal} if it has the form
$M=\diag(M_1,M_2,\ldots,M_k)$, where $M_1,\ldots,M_k$ are square matrices.
In other words, $M$ has $M_1,\ldots,M_k$ as its diagonal blocks and zero
blocks elsewhere.
A real matrix $M$ is said to be \emph{normal} if $MM^t=M^tM$, where $M^t$ denotes the transpose of $M$.
\\Once an ordering of the vertices $v_1,v_2,\ldots,v_n$ of a signed graph
$\Gamma=(G,\sigma)$ is fixed, its \emph{adjacency matrix} $A_{\Gamma}$ is determined.
It is the $n\times n$ symmetric matrix whose $(i,j)$-entry is
$\sigma(v_i,v_j)$ if $v_i\sim v_j$, and $0$ otherwise.  Similarly, the \emph{degree matrix}
$D_{\Gamma}$ and the \emph{net-degree matrix} $D^{\pm}_{\Gamma}$ are determined;
they are diagonal matrices whose diagonal entries are given by
$(D_{\Gamma})_{ii}=d_{\Gamma}(v_i)$ and
$(D^{\pm}_{\Gamma})_{ii}=d^{\pm}_{\Gamma}(v_i)$, respectively.
When we write $D^{(\pm)}_\Gamma$, we simultaneously refer to the degree matrix  $D_{\Gamma}$ and the net-degree matrix  $D^{\pm}_{\Gamma}$.
A change of vertex ordering or an isomorphism between signed graphs induces
the same permutation similarity on both $A_\Gamma$ and $D^{(\pm)}_\Gamma$;
that is, there exists a permutation matrix $P$ such that
$P^{-1}A_\Gamma P$ and $P^{-1}D^{(\pm)}_\Gamma P$ are the corresponding
matrices for the new ordering or the isomorphic signed graph.
On the other hand, if two signatures $\sigma$ and $\sigma'$ on the same
underlying graph $G$ are switching equivalent, with
$\Gamma=(G,\sigma)$ and $\Gamma'=(G,\sigma')$, then there exists a
\emph{switching matrix} $S$, that is, a diagonal matrix with diagonal entries
in $\{\pm1\}$, such that
$A_{\Gamma'}=S^{-1} A_\Gamma S$; notice that  $D_{\Gamma'}=S^{-1} D_\Gamma S=D_\Gamma$ but, in general, $D^{\pm}_{\Gamma'}\neq D^{\pm}_\Gamma $.
Therefore, if two signed graphs $\Gamma_1$ and $\Gamma_2$ are switching isomorphic, then, for
any choice of vertex orderings, there exist a permutation matrix $P$ and a
switching matrix $S$ such that
$A_{\Gamma_2}=(PS)^{-1}A_{\Gamma_1}(PS)$ and
$D_{\Gamma_2}=(PS)^{-1}D_{\Gamma_1}(PS)
=P^{-1}D_{\Gamma_1}P$. In particular, the spectrum of the adjacency matrix depends only on the switching
isomorphism class of the signed graph.
\\Finally, we recall the definitions of the \emph{Laplacian matrix}
$L_{\Gamma}$ and the \emph{net-Laplacian matrix} $L^\pm_{\Gamma}$ of a signed graph
$\Gamma$, namely
$L_{\Gamma}=D_{\Gamma}-A_{\Gamma}$ and
$L^\pm_{\Gamma}=D^{\pm}_{\Gamma}-A_{\Gamma}$.
If two signed graphs are switching
isomorphic, then also their Laplacian
matrices are conjugate by a matrix of the form
$PS$.

\section{(Net-)degree similarity and determination}\label{sec:3}
We introduce the two main definitions, namely the two natural possible
generalizations to signed graphs of the concept of degree similarity.
\begin{definition}\label{def:prima}
Let $\Gamma_i$  be a signed graph, and let
$A_{\Gamma_i}$ and $D_{\Gamma_i}$ denote, respectively, its adjacency
and degree matrix with respect to a common ordering of the vertices, for
$i=1,2$. We say that $\Gamma_1$ and $\Gamma_2$ are
\emph{degree similar} if there exists a real, invertible matrix $B$ such that
\begin{equation}\label{eq:nnet}
B^{-1}A_{\Gamma_1} B=A_{\Gamma_2} \quad \mbox{ and } \quad B^{-1}D_{\Gamma_1} B=D_{\Gamma_2}.
\end{equation}
\end{definition}
\begin{definition}
Let $\Gamma_i$  be a signed graph, and let
$A_{\Gamma_i}$ and $D^\pm_{\Gamma_i}$ denote, respectively, its adjacency
and net-degree matrix with respect to a common ordering of the vertices, for
$i=1,2$. We say that $\Gamma_1$ and $\Gamma_2$ are
\emph{net-degree similar} if there exists a real, invertible matrix $B$ such that
\begin{equation}\label{eq:net}
B^{-1}A_{\Gamma_1} B=A_{\Gamma_2} \quad \mbox{ and } \quad B^{-1}D^\pm_{\Gamma_1} B=D^\pm_{\Gamma_2}.
\end{equation}
Finally, we say that $\Gamma_1$ and $\Gamma_2$ are \emph{simultaneously degree similar and net-degree similar} if there exists a real, invertible matrix $B$ such that both Eq.~\eqref{eq:nnet} and Eq.~\eqref{eq:net} hold.
\end{definition}

Observe that both definitions are independent of the ordering of the
vertices, which only changes the matrix $B$ by a permutation matrix, and
that two (net-)degree similar signed graphs have the same multiset of
(net-)degrees, since similar diagonal matrices have the same diagonal
entries up to a permutation.
\begin{remark}\label{prop:regular}
Let $\Gamma_1$ be a $k$-(net-)regular signed graph, and let $\Gamma_2$
be a signed graph cospectral with $\Gamma_1$. Then $\Gamma_1$ and
$\Gamma_2$ are (net-)degree similar if and only if $\Gamma_2$ is also
$k$-(net-)regular, since $D^{(\pm)}_{\Gamma_1}=kI$ commutes with every
matrix. In particular, every pair of cospectral $k$-(net-)regular signed
graphs is (net-)degree similar.
\end{remark}

A general characterization of (net-)degree similarity, a direct consequence of a classical theorem of Specht and Wiegmann, is the following; it will be mostly useful in the negative direction, that is, to show that two
signed graphs are not (net-)degree similar.
\begin{proposition}\label{prop:specht}
Two signed graphs $\Gamma_1$ and $\Gamma_2$ are (net-)degree similar if
and only if
$\operatorname{tr}\big(w(A_{\Gamma_1},D^{(\pm)}_{\Gamma_1})\big)
=\operatorname{tr}\big(w(A_{\Gamma_2},D^{(\pm)}_{\Gamma_2})\big)$ for
every word $w$ in two letters.
\end{proposition}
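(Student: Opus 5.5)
The plan is to reduce the statement to the theorem of Specht and Wiegmann on unitary similarity of pairs of matrices, and then to pass from unitary similarity to real similarity. The version I will use is Wiegmann's extension of Specht's theorem: two pairs $(X_1,Y_1)$ and $(X_2,Y_2)$ of complex $n\times n$ matrices, each generating a self-adjoint family (closed under conjugate transpose), are simultaneously unitarily similar if and only if $\operatorname{tr} w(X_1,Y_1)=\operatorname{tr} w(X_2,Y_2)$ for every word $w$ in two letters. In our setting $X_i=A_{\Gamma_i}$ and $Y_i=D^{(\pm)}_{\Gamma_i}$ are real symmetric matrices. Hence the words in $X_i,Y_i$ already form a set closed under transpose, and the words in $X_i,X_i^t,Y_i,Y_i^t$ reduce to words in two letters.

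The "only if" direction is immediate. If $B^{-1}A_{\Gamma_1}B=A_{\Gamma_2}$ and $B^{-1}D^{(\pm)}_{\Gamma_1}B=D^{(\pm)}_{\Gamma_2}$, then for every word $w$ we have $w(A_{\Gamma_2},D^{(\pm)}_{\Gamma_2})=B^{-1}w(A_{\Gamma_1},D^{(\pm)}_{\Gamma_1})B$. Since trace is invariant under similarity, the traces coincide.

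For the "if" direction, the trace hypothesis and the Specht–Wiegmann theorem give a unitary $U$ with $U^{*}A_{\Gamma_1}U=A_{\Gamma_2}$ and $U^{*}D^{(\pm)}_{\Gamma_1}U=D^{(\pm)}_{\Gamma_2}$. The main step is to produce a real invertible $B$ from $U$.

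\begin{itemize}
\item Since all four matrices are real, both the real part $R$ and the imaginary part $T$ of $U=R+iT$ satisfy $M_1R=RM_2$ and $M_1T=TM_2$ for $M\in\{A,D^{(\pm)}\}$. This follows by comparing real and imaginary parts of $M_1U=UM_2$.
\item Therefore $B_s=R+sT$ intertwines both pairs for every real $s$.
\item The polynomial $p(s)=\det(R+sT)$ is not identically zero, because $p(i)=\det U\neq0$.
\item Hence some real $s$ gives $\det B_s\neq 0$, and for this $s$ the matrix $B=B_s$ realizes (net-)degree similarity.
\end{itemize}

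This is the standard argument that complex similarity of real matrices implies real similarity, applied simultaneously to both matrices.

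If one also wants an orthogonal $B$, as in Lemma~\ref{lem:orto}, the same trace condition yields it by the real version of Specht's theorem. This is not needed for the statement.

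I expect the main obstacle to be purely bookkeeping: making sure the hypotheses of the Specht–Wiegmann theorem (pairs of normal or self-adjoint generators) are met, and that words in two letters suffice. The symmetry of $A$ and $D^{(\pm)}$ settles both points. Once these are checked, the realification step above completes the proof.
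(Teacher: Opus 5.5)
Your proof is correct. Its core matches the paper's. The ``only if'' direction is the trace-invariance argument. The converse comes from the Specht--Wiegmann theorem applied to the real symmetric pairs $(A_{\Gamma_i},D^{(\pm)}_{\Gamma_i})$, for which words in $A,A^*,D,D^*$ collapse to words in two letters.

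You differ from the paper in how you pass from the unitary $U$ to a real matrix. The paper invokes \cite[Theorem~5]{V} to obtain a real orthogonal $Q$ in one step. You instead use the classical realification trick: the real and imaginary parts $R,T$ of $U$ both intertwine the two pairs, and $s\mapsto\det(R+sT)$ is a real polynomial that does not vanish at $s=i$. Hence some real $s$ gives an invertible real intertwiner. Your route is elementary and self-contained, and it produces exactly what the definition requires. The paper's citation additionally delivers orthogonality, but nothing is lost on your side, since Lemma~\ref{lem:orto} turns any real invertible intertwiner into an orthogonal one by polar decomposition.

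One point of phrasing: as a general statement, your paraphrase of Wiegmann's theorem is too loose. It speaks of pairs ``generating a self-adjoint family'' and uses traces of words in two letters only. For instance, $X_1=\diag(1,2)$ and $Y_1=\begin{pmatrix}0&1\\1&0\end{pmatrix}$ generate all of $M_2(\mathbb{C})$. Conjugating both by $S=\begin{pmatrix}1&1\\0&1\end{pmatrix}$ preserves every word trace and the generated algebra, yet makes $X_2$ non-normal, so the two pairs are not unitarily similar. The correct hypothesis is equality of traces of the same words in $X_i,X_i^*,Y_i,Y_i^*$. What you actually use is $X_i^*=X_i$ and $Y_i^*=Y_i$ for both $i$, and that is precisely what reduces the condition to words in two letters. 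So your argument is unaffected.
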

\begin{proof}
If $B$ satisfies Eq.~\eqref{eq:nnet} (resp.\ Eq.~\eqref{eq:net}), then it
conjugates $w(A_{\Gamma_1},D^{(\pm)}_{\Gamma_1})$ into
$w(A_{\Gamma_2},D^{(\pm)}_{\Gamma_2})$ for every word $w$, so that the
two matrices have the same trace.
Conversely, by a theorem of Specht \cite{Specht}, extended to sets of
matrices by Wiegmann \cite{Wiegmann} (see also \cite[Theorem~2.2]{Jing}),
there is a unitary matrix $U$ such that
$A_{\Gamma_2}=U^*A_{\Gamma_1}U$ and
$D^{(\pm)}_{\Gamma_2}=U^*D^{(\pm)}_{\Gamma_1}U$. By
\cite[Theorem~5]{V}, there exists a real orthogonal matrix $Q$ such that
$A_{\Gamma_2}=Q^{-1}A_{\Gamma_1}Q$ and
$D^{(\pm)}_{\Gamma_2}=Q^{-1}D^{(\pm)}_{\Gamma_1}Q$.
\end{proof}
For instance, taking $w=D^{a} A D^{b}A$
one gets that two (net-)degree similar signed graphs have the same
generalized Zagreb (net-)index
$$\sum_{\{u,v\}\in E_\Gamma}\big(d^{(\pm)}_\Gamma(u)^a d^{(\pm)}_\Gamma(v)^b
+d^{(\pm)}_\Gamma(u)^b d^{(\pm)}_\Gamma(v)^a\big),$$ in the sense of
\cite{AI,NK}; in particular this implies that they share the second Zagreb \mbox{(net-)index}
$\sum_{\{u,v\}\in E_\Gamma}d^{(\pm)}_\Gamma(u)d^{(\pm)}_\Gamma(v)$.

If $\Gamma_1$ and $\Gamma_2$ are (net-)degree similar, then clearly
$\det(tI-A_{\Gamma_1}+\mu D^{(\pm)}_{\Gamma_1})=\det(tI-A_{\Gamma_2}+\mu D^{(\pm)}_{\Gamma_2})$,
that is, they are cospectral with respect to every matrix
$A_\Gamma-\mu D^{(\pm)}_\Gamma$. For unsigned graphs, whether the converse
holds is precisely the question raised in \cite{WLLX}; the answer is
negative, see the pair of graphs on nine vertices in \cite{DM} (as
observed in \cite{Wan}) and the infinite family of trees in
\cite[Corollary~4]{GS}. For signed graphs the converse fails for both
notions, with counterexamples on seven and six vertices, see
Example~\ref{exa:mu7}. A crucial point is that, by
Newton's identities, the equality of the polynomials
$\det(tI-A_\Gamma+\mu D^{(\pm)}_\Gamma)$ amounts to the equality of the
traces $\operatorname{tr}\big((A_\Gamma-\mu D^{(\pm)}_\Gamma)^k\big)$ for
all $k$ and $\mu$, i.e.\ of certain sums of traces of words in $A_\Gamma$
and $D^{(\pm)}_\Gamma$, whereas by Proposition~\ref{prop:specht}
(net-)degree similarity amounts to the equality of the trace of every
single word.
\begin{figure}
\begin{centering}
\begin{tikzpicture}[scale=0.7,
  vertex/.style={circle, fill=black, minimum size=5pt, inner sep=0pt},
  pos edge/.style={thin},
  neg edge/.style={thin, dashed},
  lbl/.style={draw=none, fill=none, font=\small}]
\node[vertex] (1) at (0,1.4)    {};
\node[vertex] (2) at (0,-1.4)   {};
\node[vertex] (3) at (-1.6,0)   {};
\node[vertex] (4) at (1.6,0)    {};
\node[vertex] (5) at (-2.4,1.8) {};
\node[vertex] (6) at (2.4,-1.8) {};
\node[vertex] (7) at (3.4,0)    {};
\node[lbl,above] at (0,1.4)     {$$};
\node[lbl,below] at (0,-1.4)    {$$};
\node[lbl,below left] at (-1.6,0)  {$$};
\node[lbl,above right] at (1.6,0)  {$$};
\node[lbl,above] at (-2.4,1.8)  {$$};
\node[lbl,below] at (2.4,-1.8)  {$$};
\node[lbl,right] at (3.4,0)     {$$};
\draw[pos edge] (1) -- (3);
\draw[pos edge] (1) -- (4);
\draw[neg edge] (2) -- (3);
\draw[pos edge] (2) -- (4);
\draw[pos edge] (3) -- (4);
\draw[pos edge] (1) -- (5);
\draw[pos edge] (3) -- (5);
\draw[pos edge] (2) -- (6);
\draw[pos edge] (4) -- (6);
\draw[pos edge] (4) -- (7);
\node[lbl] at (-2.4,-1.8) {$\Gamma_1$};
\end{tikzpicture}
\qquad\qquad
\begin{tikzpicture}[scale=0.7,
  vertex/.style={circle, fill=black, minimum size=5pt, inner sep=0pt},
  pos edge/.style={thin},
  neg edge/.style={thin, dashed},
  lbl/.style={draw=none, fill=none, font=\small}]
\node[vertex] (1) at (0,1.4)    {};
\node[vertex] (2) at (0,-1.4)   {};
\node[vertex] (3) at (-1.6,0)   {};
\node[vertex] (4) at (1.6,0)    {};
\node[vertex] (5) at (-2.4,1.8) {};
\node[vertex] (6) at (2.4,-1.8) {};
\node[vertex] (7) at (3.4,0)    {};
\node[lbl,above] at (0,1.4)     {$$};
\node[lbl,below] at (0,-1.4)    {$$};
\node[lbl,below left] at (-1.6,0)  {$$};
\node[lbl,above right] at (1.6,0)  {$$};
\node[lbl,above] at (-2.4,1.8)  {$$};
\node[lbl,below] at (2.4,-1.8)  {$$};
\node[lbl,right] at (3.4,0)     {$$};
\draw[pos edge] (1) -- (3);
\draw[neg edge] (1) -- (4);
\draw[pos edge] (2) -- (3);
\draw[pos edge] (2) -- (4);
\draw[pos edge] (3) -- (4);
\draw[pos edge] (1) -- (5);
\draw[pos edge] (3) -- (5);
\draw[pos edge] (2) -- (6);
\draw[pos edge] (4) -- (6);
\draw[pos edge] (4) -- (7);
\node[lbl] at (-2.4,-1.8) {$\Gamma_2$};
\end{tikzpicture}
\end{centering}
\caption{The two signed graphs of Example~\ref{exa:mu7} with the same polynomial
$\det(tI-A_\Gamma+\mu D_\Gamma)$ which are not degree similar.}\label{fig:mu7}
\end{figure}

\begin{example}\label{exa:mu7}
Let $\Gamma_1$, $\Gamma_2$ be the signed graphs depicted in
Fig.~\ref{fig:mu7}. Here, and in all subsequent figures, solid lines denote positive edges and dashed lines negative edges.
 A direct computation shows that
$\det(tI-A_{\Gamma_1}+\mu D_{\Gamma_1})=\det(tI-A_{\Gamma_2}+\mu D_{\Gamma_2})$,
whereas
$
\operatorname{tr}\big((A_{\Gamma_1}^2D_{\Gamma_1})^2\big)=1414$ and
$\operatorname{tr}\big((A_{\Gamma_2}^2D_{\Gamma_2})^2\big)=1422;
$
hence $\Gamma_1$ and $\Gamma_2$ are not degree similar by
Proposition~\ref{prop:specht}.
Similarly, the signed graphs $\Gamma_1$ and $\Gamma_2$ of Fig.~\ref{fig:netmu6}
have the same polynomial $\det(tI-A_\Gamma+\mu D^\pm_\Gamma)$, but
$\operatorname{tr}\big((A_{\Gamma_1}^2D^\pm_{\Gamma_1})^2\big)=26$ and
$\operatorname{tr}\big((A_{\Gamma_2}^2D^\pm_{\Gamma_2})^2\big)=18$, so
they are not net-degree similar by Proposition~\ref{prop:specht}.
\end{example}
\begin{figure}
\begin{centering}
\begin{tikzpicture}[scale=0.7,
  vertex/.style={circle, fill=black, minimum size=5pt, inner sep=0pt},
  pos edge/.style={thin},
  neg edge/.style={thin, dashed},
  lbl/.style={draw=none, fill=none, font=\small}]
\node[vertex] (6) at (0,0)       {};
\node[vertex] (1) at (-2.2,1)    {};
\node[vertex] (4) at (-2.2,-1)   {};
\node[vertex] (2) at (2.2,1)     {};
\node[vertex] (5) at (2.2,-1)    {};
\node[vertex] (3) at (0,1.8)     {};
\draw[pos edge] (1) -- (4);
\draw[neg edge] (1) -- (6);
\draw[neg edge] (4) -- (6);
\draw[neg edge] (2) -- (5);
\draw[pos edge] (2) -- (6);
\draw[pos edge] (5) -- (6);
\draw[pos edge] (3) -- (6);
\node[lbl] at (-3,-1) {$\Gamma_1$};
\node[lbl] at (3,-1) {$$};
\end{tikzpicture}
\qquad\qquad
\begin{tikzpicture}[scale=0.7,
  vertex/.style={circle, fill=black, minimum size=5pt, inner sep=0pt},
  pos edge/.style={thin},
  neg edge/.style={thin, dashed},
  lbl/.style={draw=none, fill=none, font=\small}]
\node[vertex] (4) at (-2,1)      {};
\node[vertex] (1) at (0,1)       {};
\node[vertex] (5) at (2,1)       {};
\node[vertex] (2) at (-2,-1)     {};
\node[vertex] (6) at (0,-1)      {};
\node[vertex] (3) at (2,-1)      {};
\draw[pos edge] (1) -- (4);
\draw[pos edge] (1) -- (5);
\draw[neg edge] (1) -- (6);
\draw[neg edge] (2) -- (4);
\draw[pos edge] (2) -- (6);
\draw[neg edge] (3) -- (5);
\draw[pos edge] (3) -- (6);
\node[lbl] at (-3,-1) {$\Gamma_2$};
\end{tikzpicture}
\end{centering}
\caption{The two signed graphs of Example~\ref{exa:mu7} with the same polynomial
$\det(tI-A_\Gamma+\mu D_\Gamma^\pm)$ which are not net-degree similar.}\label{fig:netmu6}
\end{figure}

Another natural question is whether (net-)degree similarity is
invariant under switching equivalence. The following result provides the
answer.
\begin{proposition}\label{prop:sw}
If two signed graphs $\Gamma_1$ and
$\Gamma_2$ are degree similar and $\Gamma_1'$ is switching isomorphic to
$\Gamma_1$, then $\Gamma_1'$ and $\Gamma_2$ are degree similar. By contrast, net-degree similarity depends on the representative of the
switching equivalence class.
\end{proposition}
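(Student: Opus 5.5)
The first statement follows by composing similarities. Since $\Gamma_1'$ is switching isomorphic to $\Gamma_1$, the discussion in Section~\ref{sec:2} gives a permutation matrix $P$ and a switching matrix $S$ with
$$A_{\Gamma_1'}=(PS)^{-1}A_{\Gamma_1}(PS), \qquad D_{\Gamma_1'}=(PS)^{-1}D_{\Gamma_1}(PS).$$
The second identity holds because $S$ is diagonal and therefore commutes with the diagonal matrix $D_{\Gamma_1}$. If $B$ realizes Eq.~\eqref{eq:nnet} for $\Gamma_1,\Gamma_2$, then $B'=(PS)^{-1}B$ is real and invertible. We have $A_{\Gamma_1}=(PS)A_{\Gamma_1'}(PS)^{-1}$, and similarly for $D$. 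Hence $B'^{-1}A_{\Gamma_1'}B'=B^{-1}A_{\Gamma_1}B=A_{\Gamma_2}$, and likewise for the degree matrices. This gives Eq.~\eqref{eq:nnet} for the pair $\Gamma_1',\Gamma_2$.

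For the second statement, it suffices to exhibit a pair $\Gamma_1,\Gamma_2$ that is net-degree similar, together with a switching $\Gamma_1'$ of $\Gamma_1$ that is not net-degree similar to $\Gamma_2$. The simplest choice is $\Gamma_1=\Gamma_2=\Gamma$, with $B=I$. It then suffices to switch $\Gamma$ so that the multiset of net-degrees changes. Similar diagonal matrices have the same diagonal entries up to permutation, so a change in this multiset already rules out net-degree similarity, and no trace computation via Proposition~\ref{prop:specht} is needed.

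As a concrete instance, take $\Gamma=K_2$ with a positive edge, whose net-degrees are $\{1,1\}$. Switching at one endpoint gives a negative edge with net-degrees $\{-1,-1\}$. The adjacency matrices are similar, but $I$ and $-I$ are not, so the two signed graphs are not net-degree similar. A less degenerate option is the all-positive path $P_3$: switching at a leaf changes the net-degrees from $\{1,2,1\}$ to $\{-1,0,1\}$.

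The only subtle point is the first part. One must check that $S$ preserves $D_{\Gamma}$ but not $D^{\pm}_\Gamma$. The argument uses the matrix identity $D_{\Gamma'}=S^{-1}D_\Gamma S$, which holds because both matrices are diagonal. That identity is exactly what fails for net-degrees, since switching genuinely changes the net-degree matrix rather than conjugating it.
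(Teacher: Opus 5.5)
Your proof is correct and follows the paper's. The first part is the same composition of $B$ with the signed permutation matrix $PS$, and your $K_2$ counterexample is the case $k=1$ of the paper's switched all-positive $k$-regular graph, with the net-degree multiset invariant doing the work of Remark~\ref{prop:regular}.

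One small correction to your explanation: $D_{\Gamma_1'}=(PS)^{-1}D_{\Gamma_1}(PS)$ does not hold merely because the matrices are diagonal, since that would apply equally to $D^{\pm}$. It holds because switching leaves the underlying graph, and hence every degree, unchanged, and this is exactly what fails for net-degrees.
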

\begin{proof}
If $\Gamma_1'$ is switching isomorphic to $\Gamma_1$, there is a signed
permutation matrix $PS$ with $(PS)^{-1}A_{\Gamma_1'}PS=A_{\Gamma_1}$, and
then $(PS)^{-1}D_{\Gamma_1'}PS=D_{\Gamma_1}$, since switching does not
change the degrees. Hence, if $B$ satisfies Eq.~\eqref{eq:nnet}, so does
$PSB$ in place of $B$ for $\Gamma_1'$ and $\Gamma_2$. By contrast, let
$\Gamma$ be an all-positive $k$-regular graph, $k\ge1$, and let
$\Gamma'$ be obtained from $\Gamma$ by switching at a single vertex,
so that $\Gamma'$ is not net-regular. Then $\Gamma$ is net-degree similar to itself, whereas $\Gamma'$ and
$\Gamma$ are not net-degree similar, by Remark~\ref{prop:regular}.
\end{proof}
Accordingly, we say that a signed graph $\Gamma$ is \emph{determined by its
degree similarity class} if every signed graph that is degree similar to
$\Gamma$ is switching isomorphic to $\Gamma$. We say that $\Gamma$
is \emph{determined by its net-degree similarity class} if every signed graph
that is net-degree similar to $\Gamma$ is isomorphic to $\Gamma$.

Theorem 3 of \cite{GS}  easily implies that every tree is determined by
its degree similarity class in the class of unsigned graphs, since a tree can
be degree similar only to another tree. The situation becomes more involved
for signed graphs.

\begin{figure}[h]
\begin{centering}
\begin{tikzpicture}[scale=0.7,
  vertex/.style={circle, fill=black, minimum size=5pt, inner sep=0pt},
  pos edge/.style={thin},
  neg edge/.style={thin, dashed},
  lbl/.style={draw=none, fill=none, font=\small}
]
\node[vertex] (a0) at (6.86, 0.00) {};
\node[vertex] (a1) at (1.00, 1.40) {};
\node[vertex] (a2) at (4.64, 1.40) {};
\node[vertex] (a3) at (6.82, 1.40) {};
\node[vertex] (a4) at (2.75, 2.80) {};
\node[vertex] (a5) at (4.34, 2.80) {};
\node[vertex] (a6) at (7.75, 1.40) {};
\node[vertex] (a7) at (8.68, 1.40) {};
\node[vertex] (a8) at (7.44, 2.80) {};
\node[vertex] (a9) at (8.06, 2.80) {};
\node[vertex] (a10) at (9.61, 1.40) {};
\node[vertex] (a11) at (10.54, 1.40) {};
\node[vertex] (a12) at (9.30, 2.80) {};
\node[vertex] (a13) at (9.92, 2.80) {};
\node[vertex] (a14) at (5.27, 2.80) {};
\node[vertex] (a15) at (6.20, 2.80) {};
\node[vertex] (a16) at (4.96, 4.20) {};
\node[vertex] (a17) at (5.58, 4.20) {};
\node[vertex] (a18) at (1.78, 4.20) {};
\node[vertex] (a19) at (3.72, 4.20) {};
\node[vertex] (a20) at (0.62, 5.60) {};
\node[vertex] (a21) at (1.24, 5.60) {};
\node[vertex] (a22) at (2.17, 5.60) {};
\node[vertex] (a23) at (3.10, 5.60) {};
\node[vertex] (a24) at (1.86, 7.00) {};
\node[vertex] (a25) at (2.48, 7.00) {};

\draw[neg edge] (a0) -- (a1);
\draw[pos edge] (a0) -- (a2);
\draw[neg edge] (a0) -- (a3);
\draw[pos edge] (a0) -- (a6);
\draw[neg edge] (a0) -- (a7);
\draw[pos edge] (a0) -- (a10);
\draw[neg edge] (a0) -- (a11);
\draw[neg edge] (a2) -- (a4);
\draw[neg edge] (a2) -- (a5);
\draw[pos edge] (a2) -- (a14);
\draw[neg edge] (a2) -- (a15);
\draw[pos edge] (a4) -- (a18);
\draw[neg edge] (a4) -- (a19);
\draw[neg edge] (a6) -- (a8);
\draw[neg edge] (a6) -- (a9);
\draw[neg edge] (a10) -- (a12);
\draw[neg edge] (a10) -- (a13);
\draw[neg edge] (a14) -- (a16);
\draw[neg edge] (a14) -- (a17);
\draw[neg edge] (a18) -- (a20);
\draw[neg edge] (a18) -- (a21);
\draw[pos edge] (a18) -- (a22);
\draw[neg edge] (a18) -- (a23);
\draw[neg edge] (a22) -- (a24);
\draw[neg edge] (a22) -- (a25);
\end{tikzpicture}
\begin{tikzpicture}[scale=0.7,
  vertex/.style={circle, fill=black, minimum size=5pt, inner sep=0pt},
  pos edge/.style={thin},
  neg edge/.style={thin, dashed},
  lbl/.style={draw=none, fill=none, font=\small}
]
\node[vertex] (b0) at (5.91, 0.00) {};
\node[vertex] (b1) at (0.00, 1.40) {};
\node[vertex] (b2) at (1.78, 1.40) {};
\node[vertex] (b3) at (3.72, 1.40) {};
\node[vertex] (b4) at (0.62, 2.80) {};
\node[vertex] (b5) at (1.24, 2.80) {};
\node[vertex] (b6) at (7.05, 1.40) {};
\node[vertex] (b7) at (8.68, 1.40) {};
\node[vertex] (b8) at (6.04, 2.80) {};
\node[vertex] (b9) at (8.06, 2.80) {};
\node[vertex] (b10) at (9.61, 1.40) {};
\node[vertex] (b11) at (10.54, 1.40) {};
\node[vertex] (b12) at (9.30, 2.80) {};
\node[vertex] (b13) at (9.92, 2.80) {};
\node[vertex] (b14) at (2.17, 2.80) {};
\node[vertex] (b15) at (3.10, 2.80) {};
\node[vertex] (b16) at (1.86, 4.20) {};
\node[vertex] (b17) at (2.48, 4.20) {};
\node[vertex] (b18) at (4.65, 4.20) {};
\node[vertex] (b19) at (5.58, 4.20) {};
\node[vertex] (b20) at (4.34, 5.60) {};
\node[vertex] (b21) at (4.96, 5.60) {};
\node[vertex] (b22) at (6.51, 4.20) {};
\node[vertex] (b23) at (7.44, 4.20) {};
\node[vertex] (b24) at (6.20, 5.60) {};
\node[vertex] (b25) at (6.82, 5.60) {};
\node[lbl,above] at (-1, 5.60) {$$};
\draw[neg edge] (b0) -- (b1);
\draw[pos edge] (b0) -- (b2);
\draw[neg edge] (b0) -- (b3);
\draw[pos edge] (b0) -- (b6);
\draw[neg edge] (b0) -- (b7);
\draw[pos edge] (b0) -- (b10);
\draw[neg edge] (b0) -- (b11);
\draw[neg edge] (b2) -- (b4);
\draw[neg edge] (b2) -- (b5);
\draw[pos edge] (b2) -- (b14);
\draw[neg edge] (b2) -- (b15);
\draw[neg edge] (b6) -- (b8);
\draw[neg edge] (b6) -- (b9);
\draw[pos edge] (b8) -- (b18);
\draw[neg edge] (b8) -- (b19);
\draw[pos edge] (b8) -- (b22);
\draw[neg edge] (b8) -- (b23);
\draw[neg edge] (b10) -- (b12);
\draw[neg edge] (b10) -- (b13);
\draw[neg edge] (b14) -- (b16);
\draw[neg edge] (b14) -- (b17);
\draw[neg edge] (b18) -- (b20);
\draw[neg edge] (b18) -- (b21);
\draw[neg edge] (b22) -- (b24);
\draw[neg edge] (b22) -- (b25);
\end{tikzpicture}
\end{centering}
\caption{The net-degree similar, nonisomorphic signed trees of Example \ref{exa:-1}.}\label{fig:-1}
\end{figure}

\begin{example}\label{exa:-1}
Consider the signed trees $T_1$ and $T_2$ on $26$ vertices depicted,
respectively, on the left and on the right in Fig.~\ref{fig:-1}.
Their underlying graphs are cospectral trees (and hence so are
$T_1$ and $T_2$), and their signatures are $-1$-net-regular.
Therefore, by Remark~\ref{prop:regular},
they are net-degree similar. Notice that the underlying graphs of $T_1$ and $T_2$ are not isomorphic.
Therefore, these two signed trees are not determined by their net-degree-
similarity classes.
\\   The underlying graphs were found by a computer search among the trees
   admitting a $\pm1$-eigenvector with eigenvalue $1$, which are
   characterized in \cite[Theorem~23]{SS}; these are exactly the trees
   admitting a net-regular signature (see also \cite{SEDIE}).
\end{example}
\begin{lemma}\label{lem:tree}
Two signed trees are degree similar if and only if their underlying graphs are
isomorphic. In contrast, neither implication holds for the analogous
statement involving net-degree similarity.
\end{lemma}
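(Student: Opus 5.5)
The lemma has three parts: the degree-similarity equivalence, and the failure of each implication in the net-degree version. The key fact throughout is that every signature on a tree (indeed on any forest) is balanced, since there are no cycles. Hence a signed tree $(T,\sigma)$ is switching equivalent to the all-positive tree $T$: there is a switching matrix $S$ with $S^{-1}A_{(T,\sigma)}S=A_T$, while $S^{-1}D_{(T,\sigma)}S=D_T$, because $D$ is unchanged by switching.

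For the first statement, suppose first that the underlying trees $T_1$ and $T_2$ are isomorphic, via a permutation matrix $P$. Then I compose the switching matrices $S_1,S_2$ with $P$, obtaining $B=S_1PS_2^{-1}$. This $B$ conjugates $A_{\Gamma_1}$ to $A_{\Gamma_2}$ and $D_{\Gamma_1}$ to $D_{\Gamma_2}$, exactly as in the proof of Proposition~\ref{prop:sw}. Conversely, suppose $\Gamma_1$ and $\Gamma_2$ are degree similar. By Proposition~\ref{prop:sw}, applied twice, the all-positive trees $T_1$ and $T_2$ are degree similar as unsigned graphs. By \cite[Theorem~3]{GS} (McKay's result), degree similar trees are isomorphic. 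The only point needing care is that Proposition~\ref{prop:sw} is stated for one side, but degree similarity is symmetric, since $B^{-1}$ works in the reverse direction.

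For the net-degree version, I need two counterexamples.
\begin{itemize}
\item \emph{Net-degree similar does not imply isomorphic underlying graphs.} This is precisely Example~\ref{exa:-1}: $T_1$ and $T_2$ are net-degree similar, but their underlying trees are not isomorphic.
\item \emph{Isomorphic underlying graphs do not imply net-degree similarity.} Take the star $K_{1,2}$, i.e.\ the path $P_3$, with the all-positive signature, and the same path with one negative edge. The multisets of net-degrees are $\{2,1,1\}$ and $\{0,1,-1\}$. These differ, and net-degree similar graphs must have the same multiset of net-degrees, as observed after the definitions.
\end{itemize}
Alternatively, for the second bullet I could invoke the construction in the proof of Proposition~\ref{prop:sw} with $k=1$: $K_2$ switched at one vertex. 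Its net-degree multiset $\{-1,-1\}$ differs from $\{1,1\}$, so it also works.

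I expect no serious obstacle, since the heavy lifting sits in \cite{GS} and Example~\ref{exa:-1}. The one step to state carefully is the reduction from the signed to the unsigned setting, which rests on every signature on a tree being balanced and on Proposition~\ref{prop:sw}.
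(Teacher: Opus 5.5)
Your proposal is correct and follows the paper's proof. You reduce to all-positive trees via balance and Proposition~\ref{prop:sw}, invoke Theorem~3 of Godsil--Sun, and use Example~\ref{exa:-1} to show that net-degree similarity does not imply isomorphic underlying graphs; the only difference is your witness for ``isomorphic underlying graphs does not imply net-degree similar'', where you compare net-degree multisets on a signed $P_3$ (or $K_2$) while the paper takes a non-net-regular signature on the tree of Example~\ref{exa:-1} and applies Remark~\ref{prop:regular}, and both work.
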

\begin{proof}
Let $T_1$ and $T_2$ be two signed trees. Since every signed tree is balanced,
it is switching equivalent to the all-positive signed tree on the same
underlying graph. Thus, by Proposition~\ref{prop:sw}, we may assume that
$T_1$ and $T_2$ are both all-positive. The result then follows directly from
\cite[Theorem 3]{GS}.
\\On the other hand, the trees $T_1$ and $T_2$ of Example~\ref{exa:-1} are
net-degree similar, while their underlying graphs are not isomorphic. Moreover,
it suffices to consider any signed tree $T_1'$ on the same underlying graph as
$T_1$ that is not net-regular, in order to obtain two signed trees with
isomorphic underlying graphs that are not net-degree similar according to Remark \ref{prop:regular}.
\end{proof}
The claim of Lemma~\ref{lem:tree} alone does not yet guarantee that a
signed tree is determined by its degree similarity class, unlike
\cite[Theorem 3]{GS} for unsigned trees. The only remaining obstruction is the possibility that a signed graph that is not a tree may be degree similar to a tree. We shall prove that this cannot occur.

Before doing so, we record  in Lemma \ref{lem:orto} a standard consequence  of the polar
decomposition, which we prove for completeness.
\begin{lemma}\label{lem:orto}
Let $\Gamma_1$ and $\Gamma_2$ be two (net-)degree similar signed graphs,
with adjacency and (net-)degree matrices taken with respect to a common
ordering of the vertices. Then there exists an orthogonal matrix $Q$ such
that
\begin{equation*}
Q^{t}A_{\Gamma_1} Q=A_{\Gamma_2} \quad \mbox{ and } \quad
Q^{t}D^{(\pm)}_{\Gamma_1} Q=D^{(\pm)}_{\Gamma_2}.
\end{equation*}
\end{lemma}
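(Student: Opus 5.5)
The plan is to use the polar decomposition of the real invertible matrix $B$ from Definition~\ref{def:prima} (or from Eq.~\eqref{eq:net} in the net-degree case). Write $M$ for $A_{\Gamma_1}$ or $D^{(\pm)}_{\Gamma_1}$ and $N$ for the corresponding matrix of $\Gamma_2$. We have $MB=BN$ for both pairs, and all four matrices are real symmetric.

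The first step is to transpose $MB=BN$, which gives $B^tM=NB^t$. Then
\[
B^tB\,N = B^tMB = N\,B^tB,
\]
so $P:=B^tB$ commutes with $N$, and this holds for both $N=A_{\Gamma_2}$ and $N=D^{(\pm)}_{\Gamma_2}$.

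The second step uses the fact that $P$ is symmetric positive definite, since $B$ is invertible. Hence $P$ has a unique positive definite square root $S=P^{1/2}$, and $S$ is a real polynomial in $P$. This can be seen by Lagrange interpolation on the spectrum of $P$, taking the polynomial with values $\sqrt{\lambda}$ at the eigenvalues $\lambda$. Consequently $S$ commutes with everything that commutes with $P$; in particular $S$ commutes with $N$.

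The third step sets $Q:=BS^{-1}$. It is orthogonal, because
\[
Q^tQ=S^{-1}B^tBS^{-1}=S^{-1}PS^{-1}=I.
\]
Moreover
\[
Q^tMQ=S^{-1}B^tMBS^{-1}=S^{-1}(PN)S^{-1}=S^{-1}S^2NS^{-1}=N,
\]
where the last equality uses that $S$ commutes with $N$. This gives both required identities simultaneously.

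The only delicate point is the second step: we must ensure that the square root commutes with $N$, and not merely that $P$ does. The polynomial-in-$P$ argument handles this, and it is the step I would write out carefully. The rest is direct computation.
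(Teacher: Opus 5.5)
Your proof is correct and follows the paper's argument: the paper also sets $Q:=B(B^tB)^{-1/2}$, uses the symmetry of the matrices to show that $B^tB$ commutes with $A_{\Gamma_2}$ and $D^{(\pm)}_{\Gamma_2}$, and concludes that its square root commutes with them as well. Your observation that $(B^tB)^{1/2}$ is a polynomial in $B^tB$ spells out the one step the paper only asserts.
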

\begin{proof}
By definition, there exists an invertible real matrix $B$ satisfying
Eq.~\eqref{eq:nnet} (resp.\ Eq.~\eqref{eq:net}). Since $B^{t}B$ is
positive definite, its positive definite square root $(B^{t}B)^{1/2}$ is
well defined and invertible, and we put $Q:=B(B^{t}B)^{-1/2}$. The
matrix $Q$ is orthogonal, since $(B^tB)^{-1/2}$ is symmetric and
commutes with $B^tB$. By Eq.~\eqref{eq:nnet} (resp.\ Eq.~\eqref{eq:net})
and the symmetry of $A_{\Gamma_2}$, the matrix $B^tB$ commutes with
$A_{\Gamma_2}$, hence so does $(B^tB)^{1/2}$, and
$Q^tA_{\Gamma_1}Q=(B^tB)^{1/2}A_{\Gamma_2}(B^tB)^{-1/2}=A_{\Gamma_2}$;
the same holds for $D^{(\pm)}_{\Gamma_2}$.
\end{proof}
In particular, degree similarity coincides with the notion of orthogonal
degree similarity introduced in \cite[Definition~27]{FX} and considered also in \cite{FWZ}.

\begin{theorem}\label{theo:tree}
Let $\Gamma$ and $T$ be degree similar signed graphs. If $T$ is a signed
tree, then $\Gamma$ is a signed tree as well.
\end{theorem}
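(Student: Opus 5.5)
The plan is to count vertices and edges, then use the Laplacian $L=D-A$ and the orthogonal matrix of Lemma~\ref{lem:orto}. By that lemma there is an orthogonal $Q$ with $Q^tA_TQ=A_\Gamma$ and $Q^tD_TQ=D_\Gamma$. Hence $Q^tL_TQ=L_\Gamma$, and $Q^tD_T^kQ=D_\Gamma^k$ for every $k$. Since similar diagonal matrices have the same diagonal entries, $\Gamma$ has $n$ vertices and the same degree multiset as $T$, hence $n-1$ edges.

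\emph{Structure of $\Gamma$.} Let $\Gamma$ have $b$ balanced and $u$ unbalanced components, the $i$-th having $n_i$ vertices and $m_i$ edges. A balanced component satisfies $m_i\ge n_i-1$ (connectedness). An unbalanced component contains a cycle, so $m_i\ge n_i$. Summing, $n-1\ge n-b$, so $b\ge 1$.

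Next, for a signed graph $\ker L$ has dimension equal to the number of balanced components. A kernel vector $x$ satisfies $\sum_{v\sim w}(x_v-\sigma(v,w)x_w)^2=0$. So $x_v=\sigma(v,w)x_w$ along every edge, which forces $x=0$ on unbalanced components and determines $x$ up to a scalar on each balanced one. Since $T$ is a connected balanced graph, $\dim\ker L_T=1$. Similarity then gives $b=1$. Equality must hold throughout the edge count, so the unique balanced component $C_0$ is a tree on $n_0$ vertices, and each unbalanced component is unicyclic. The whole task is therefore to prove $n_0=n$.

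\emph{Key step: comparing kernel vectors.} $\ker L_T$ is spanned by $x$ with $x_v=\pm1$ for all $v$ (switching vector of $T$), so $\|x\|^2=n$. Since $Q^t$ maps $\ker L_T$ onto $\ker L_\Gamma$, we get $Q^tx=\lambda y$. Here $y$ is the kernel vector of $L_\Gamma$ with entries $\pm1$ on $C_0$ and $0$ elsewhere, and orthogonality gives $\lambda^2n_0=n$. Then for every $k$,
\[
\sum_{v\in V_T}d_T(v)^k=x^tD_T^kx=(Q^tx)^t D_\Gamma^k (Q^tx)=\frac{n}{n_0}\sum_{v\in C_0}d_\Gamma(v)^k .
\]
Taking $k=1$, the left side is $2(n-1)$. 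On $C_0$ there are no edges leaving the component, so $d_\Gamma(v)$ equals the degree inside the tree $C_0$, and the right side is $\frac{n}{n_0}\cdot 2(n_0-1)$. The equation $n_0(n-1)=n(n_0-1)$ gives $n_0=n$. Hence $\Gamma=C_0$ is a signed tree.

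The main obstacle is ruling out unbalanced unicyclic components. Spectral counting alone (kernel dimensions, Laplacian determinants) does not obviously exclude them. The point of the plan is that degree similarity is realized by a single orthogonal $Q$, which transports the kernel vector together with the degree matrix. That joint transport is what makes the $k=1$ identity available. I would double-check only that $C_0$ has no edges to other components, which holds by definition of a component.
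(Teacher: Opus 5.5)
Your proof is correct and follows essentially the same argument as the paper. The orthogonal matrix of Lemma~\ref{lem:orto} carries the one-dimensional Laplacian kernel of $T$ onto that of $\Gamma$, norm preservation gives the factor $n/n_0$, and conjugating the degree matrices then yields $n_0(n-1)=n(n_0-1)$. The differences are cosmetic. You use the $\pm1$ switching vector of $T$ rather than first switching to all-positive signatures via Proposition~\ref{prop:sw}, and you write out the edge-count and kernel-dimension argument that $\Gamma$ is one tree plus unbalanced unicyclic components, which the paper only states.
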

\begin{proof}
Suppose that $n$ is the number of vertices of $T$. The Laplacian spectra of the two signed
graphs $\Gamma$ and $T$ coincide. This implies that the number of vertices $n$, the number of
edges $n-1$, and  the number of balanced connected components (i.e., nullity of the Laplacian) are the same for
the two signed graphs (see \cite{BS,zasign,Zaslavsky}).
It easily follows  that $\Gamma$ is the union of a signed tree $S$ (and hence
balanced), say on $k$ vertices, with $k\leq n$, and possibly some unbalanced unicyclic graphs.
 By Proposition~\ref{prop:sw}, we may assume, without loss of generality, that
the signed subgraph $S$  is all-positive. We will prove that $k=n$.

By Lemma~\ref{lem:orto}, there exists an orthogonal matrix $Q$ such that
$Q^t A_\Gamma Q= A_T$ and $Q^t D_\Gamma Q= D_T$ and hence also
\begin{equation}\label{eq:lap}
L_{\Gamma}Q=QL_{T}.
\end{equation}
Let $\mathbf{v}$ be the vector of length $n$ whose entries corresponding to the vertices of $S$ are
equal to $1$, while all the remaining entries are zero. We know that $\ker L_{\Gamma}$ has dimension $1$ and is in fact generated by
$\mathbf{v}$. The all-ones vector $\mathbf{1}$ is clearly in the kernel of $L_T$, and
hence also of $Q L_T$. By Eq.~\eqref{eq:lap}, it must also belong to the
kernel of $L_{\Gamma}Q$. This is possible only if $Q\mathbf{1}=c \mathbf{v}$ for some
scalar $c$.
Since $Q$ is an orthogonal matrix, it must be
$$c^2 k=\|c\mathbf{v}\|^2=\|Q\mathbf{1}\|^2=\|\mathbf{1}\|^2=n,$$
therefore $c^2=\frac{n}{k}.$
\\The sum of the degrees of a tree, either $T$ or $S$, is equal to twice the
number of edges, that is, twice the number of vertices minus two. Together
with the fact that $Q$ also conjugates the degree matrices, this yields:
$$2(n-1)=\mathbf{1}^{t}D_T\mathbf{1}
=\mathbf{1}^{t}Q^{t}D_{\Gamma}Q\mathbf{1}
=(Q\mathbf{1})^{t}D_{\Gamma}(Q\mathbf{1})
=c^2\mathbf{v}^{t}D_{\Gamma}\mathbf{v}
=2c^2(k-1),$$
hence $k(n-1)=n(k-1)$, so $n=k$.
\end{proof}
\begin{corollary}
A signed tree is determined by its degree similarity class.
\end{corollary}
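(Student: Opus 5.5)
The corollary follows by combining Theorem~\ref{theo:tree} with Lemma~\ref{lem:tree}. We recall that, by the definition given after Proposition~\ref{prop:sw}, a signed graph $\Gamma$ is determined by its degree similarity class if every signed graph degree similar to $\Gamma$ is switching isomorphic to $\Gamma$.

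Let $T$ be a signed tree and let $\Gamma$ be any signed graph degree similar to $T$. First, I apply Theorem~\ref{theo:tree}, which shows that $\Gamma$ is itself a signed tree. This is the only nontrivial step, and it is already settled by that theorem. Without it, $\Gamma$ could a priori be a non-tree, for instance a disjoint union of a tree with unbalanced unicyclic components.

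Next, since $\Gamma$ and $T$ are now two degree similar signed trees, Lemma~\ref{lem:tree} shows that their underlying graphs are isomorphic. Fix an isomorphism $\phi$ between the underlying graphs.

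Finally, every signed tree is balanced, because it contains no cycles. Hence both $\Gamma$ and $T$ are switching equivalent to the all-positive signature on their respective underlying graphs. Under $\phi$ these two all-positive signed trees are isomorphic. Therefore $\Gamma$ is switching equivalent to a signed graph isomorphic to $T$, so $\Gamma$ and $T$ are switching isomorphic, as required.

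There is no real obstacle in this argument. The only point that needs care is that the notion of determination is taken up to switching isomorphism rather than up to isomorphism. This is essential: by Example~\ref{exa:-1} and the net-degree part of Lemma~\ref{lem:tree}, the analogous statement fails for net-degree similarity.
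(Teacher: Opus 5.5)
Your proposal is correct and follows the same approach as the paper, which also obtains the corollary by combining Theorem~\ref{theo:tree} (anything degree similar to a signed tree is a signed tree) with Lemma~\ref{lem:tree} (two degree similar signed trees have isomorphic underlying graphs). Your last step is also sound. Any two signatures on the same tree are switching equivalent, so the pulled-back signature of $T$ on the underlying graph of $\Gamma$ gives a signed graph that is switching equivalent to $\Gamma$ and isomorphic to $T$, exactly as the definition of switching isomorphism requires.
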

\begin{proof}
The result follows  from Lemma~\ref{lem:tree} and Theorem~\ref{theo:tree}.
\end{proof}

In \cite[Problem~27]{GS}, it is asked whether two non-isomorphic unicyclic
graphs can be degree similar: a negative answer has recently
been given in \cite{ZF9oct26}. For unsigned graphs, the question is
equivalent to asking whether unicyclic graphs are determined by their
degree similarity class, since an unsigned graph that is Laplacian
cospectral with a unicyclic graph is itself unicyclic, being connected
with the same number of vertices and edges.
For signed graphs, we do not
know whether two unicyclic  signed graphs which are not switching isomorphic
 can be
degree similar (by Proposition~\ref{prop:sw} and \cite{ZF9oct26}, they would have to be unbalanced); on the other hand, we exhibit pairs of net-degree similar
unicyclic signed graphs with non-isomorphic underlying graphs
(see Examples~\ref{exa:uni} and \ref{exa:rooted}), and unicyclic signed graphs are not, in general,
determined by their degree similarity class, as the following example
shows.
\begin{example}\label{ex:uni}
Let $C_n^+$ and $C_n^-$ denote the balanced and unbalanced signed cycle of
order $n$, respectively. It is easy to see that, for every $n\geq 3$, the
adjacency matrix of $C_{2n}^+$ is cospectral with the adjacency matrix of
$C_n^+\cup C_n^-$ (see, for instance, \cite[Remark~4.5]{BP}; for a complete
classification of the signed graphs that are cospectral with signed cycles,
see \cite{AB}). Since the underlying graphs are $2$-regular, it follows that
$C_{2n}^+$ and $C_n^+\cup C_n^-$ are degree similar for every $n\geq 3$.
Hence, $C_{2n}^+$ is a unicyclic signed graph that is not determined by its
degree similarity class.
\end{example}

We conclude  this section by proving some properties of (net-)degree similarity that
are the signed analogue of \cite[Lemma~5]{GS} and of some results in \cite{FX}, and which will be
useful in the next sections.
\begin{proposition}\label{prop:-1}
Let $\Gamma_1$ and $\Gamma_2$ be two signed graphs, and let
$A_{\Gamma_1}$ and $A_{\Gamma_2}$ denote their adjacency matrices
with respect to an ordering of the vertices in nondecreasing order
of (net-)degree. Then $\Gamma_1$ and $\Gamma_2$ are (net-)degree similar if and only if
there exists an invertible block diagonal matrix
$M=\diag(M_1,\ldots,M_k)$ such that
$$
M^{-1}A_{\Gamma_1}M=A_{\Gamma_2}
$$
and the vertices corresponding to the same block have the same
(net-)degree.
\end{proposition}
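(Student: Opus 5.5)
Both directions reduce to the observation that, once the vertices are ordered by nondecreasing (net-)degree, the (net-)degree matrices are block scalar, and the real matrices commuting with them are exactly the block diagonal matrices.

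\emph{The ``if'' direction.} Suppose such an $M$ exists with $M^{-1}A_{\Gamma_1}M=A_{\Gamma_2}$, and the vertices in the $i$-th block all have the same (net-)degree $\delta_i$ in both graphs. Then, in the chosen ordering,
\[
D^{(\pm)}_{\Gamma_1}=D^{(\pm)}_{\Gamma_2}=\diag(\delta_1 I,\ldots,\delta_k I).
\]
This is a block scalar matrix, so it commutes with $M$. Hence $M^{-1}D^{(\pm)}_{\Gamma_1}M=D^{(\pm)}_{\Gamma_2}$, and $M$ satisfies Eq.~\eqref{eq:nnet} (resp.\ Eq.~\eqref{eq:net}).

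\emph{The ``only if'' direction.} Let $B$ be a matrix realizing the (net-)degree similarity.
\begin{enumerate}
\item Similar diagonal matrices have the same diagonal entries up to permutation. So, with both orderings nondecreasing in (net-)degree, $D^{(\pm)}_{\Gamma_1}=D^{(\pm)}_{\Gamma_2}=:D$ as matrices, and $D=\diag(\delta_1 I_{n_1},\ldots,\delta_k I_{n_k})$ with $\delta_1<\cdots<\delta_k$ distinct.
\item The relation $B^{-1}DB=D$ says $DB=BD$. Entrywise this is $(\delta_i-\delta_j)B_{uv}=0$ for $u$ in block $i$ and $v$ in block $j$. Since the $\delta$'s are distinct, $B$ vanishes off the diagonal blocks, so $B=\diag(M_1,\ldots,M_k)$.
\item Each $M_i$ is invertible, because $\det B=\prod_i\det M_i\neq 0$.
\end{enumerate}
Taking $M=B$ gives $M^{-1}A_{\Gamma_1}M=A_{\Gamma_2}$, with each block consisting of vertices of a common (net-)degree.

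The main point needing care is step 1: the orderings of the two graphs must be chosen so that the two diagonal matrices become literally equal, not merely similar. This holds because both are sorted in nondecreasing order and share the same multiset of diagonal entries. Changing the ordering within a given graph only replaces $B$ by $P^{-1}BP$-type products with permutation matrices, so there is no loss of generality. The remaining steps are routine, and the same argument applies verbatim to the net-degree case.
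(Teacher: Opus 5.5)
Your proof is correct and follows the paper's argument. In the forward direction, sorting makes both (net-)degree matrices equal to $\diag(d_1I_{n_1},\ldots,d_kI_{n_k})$ with distinct $d_i$, and commuting with this matrix forces the conjugating matrix to be block diagonal. For the converse, you read the hypothesis, as the paper does, as saying that each block carries the same (net-)degree in both graphs, so that $D^{(\pm)}_{\Gamma_1}=D^{(\pm)}_{\Gamma_2}$ is block scalar and commutes with $M$; that reading is exactly what that direction needs.
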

\begin{proof}
Suppose that  $\Gamma_1$ and $\Gamma_2$ are (net-)degree similar.
Since two diagonal matrices that are similar have the same  diagonal entries, and both $\Gamma_1$ and
$\Gamma_2$ are indexed in nondecreasing order of (net-)degree, it follows that
$D^{(\pm)}_{\Gamma_1}=D^{(\pm)}_{\Gamma_2}=D^{(\pm)}=\diag(d^{(\pm)}_1 I_{n_1},\ldots,d^{(\pm)}_k I_{n_k})$ where $d^{(\pm)}_1<\cdots<d^{(\pm)}_k$.

By definition there exists an invertible
matrix $M$ such that
$$
M^{-1}A_{\Gamma_1}M = A_{\Gamma_2}
\quad \text{and} \quad
M^{-1}D^{(\pm)}M = D^{(\pm)}.
$$
The result follows by observing that a matrix
$M$ commutes with a diagonal matrix $D^{(\pm)}$ if and only if $M_{i,j}=0$ whenever $D^{(\pm)}_{i,i}\neq D^{(\pm)}_{j,j}$.
Equivalently, $M$ must have the block structure described above.

For the converse implication, suppose that there exists a matrix $M$
with the prescribed block structure, i.e. $M=\diag(M_1,\ldots,M_k)$ with $M_j$ of size $n_j$, such that
$M^{-1}A_{\Gamma_1}M=A_{\Gamma_2}$. Clearly, it also satisfies
$$
M^{-1} D^{(\pm)} M
=
\diag(M_1^{-1},\ldots,M_k^{-1})\,
\diag(d_1^{(\pm)} I_{n_1},\ldots,d_k^{(\pm)} I_{n_k})\,
\diag(M_1,\ldots,M_k)
= D^{(\pm)}.
$$
It follows
that $\Gamma_1$ and $\Gamma_2$ are (net-)degree similar.
\end{proof}

The sufficiency part of Proposition~\ref{prop:-1} does not use the ordering
by (net-)degree, but only that the same ordering is used in both graphs.
We state this version explicitly, as it will be applied with orderings
dictated by vertex partitions.
\begin{proposition}\label{prop:0}
Let $\Gamma_1$ and $\Gamma_2$ be two signed graphs with the same (net-)degree sequence with an ordering such that
$D^{(\pm)}_{\Gamma_1}=D^{(\pm)}_{\Gamma_2}$.
Then $\Gamma_1$ and $\Gamma_2$ are (net-)degree similar if there exists an invertible block diagonal matrix
$M=\diag(M_1,\ldots,M_k)$ such that
$$
M^{-1}A_{\Gamma_1}M=A_{\Gamma_2},
$$
and the vertices corresponding to the same block have the same
(net-)degree.
\end{proposition}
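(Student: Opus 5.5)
The plan is to verify the definition of (net-)degree similarity directly with $B=M$, exactly as in the converse direction of Proposition~\ref{prop:-1}. Since the vertices are ordered so that $D^{(\pm)}_{\Gamma_1}=D^{(\pm)}_{\Gamma_2}=:D^{(\pm)}$, the condition on the degree matrices in Eq.~\eqref{eq:nnet} (resp.\ Eq.~\eqref{eq:net}) reduces to $M^{-1}D^{(\pm)}M=D^{(\pm)}$, that is, to $M$ commuting with $D^{(\pm)}$. The adjacency condition is the hypothesis itself.

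To check the commutation, write $M=\diag(M_1,\ldots,M_k)$ with $M_j$ of size $n_j$. The blocks here need not be consecutive in (net-)degree order, and different blocks may carry the same (net-)degree. Still, the hypothesis that all vertices indexing the $j$-th block share a common (net-)degree $d_j^{(\pm)}$ says that $D^{(\pm)}$ is block diagonal with the same block sizes:
\[
D^{(\pm)}=\diag\big(d_1^{(\pm)}I_{n_1},\ldots,d_k^{(\pm)}I_{n_k}\big).
\]
Block diagonal matrices with the same block sizes multiply blockwise, and each block $d_j^{(\pm)}I_{n_j}$ is scalar, so $M_j$ commutes with it. Hence $MD^{(\pm)}=D^{(\pm)}M$.

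Invertibility of $M$ is assumed, so $B=M$ satisfies both required identities, and $\Gamma_1$ and $\Gamma_2$ are (net-)degree similar. There is no real obstacle. The only point needing care is to note that the argument never uses nondecreasing ordering or distinctness of the $d_j^{(\pm)}$, only that both graphs share the same ordering with equal diagonal matrices and that $D^{(\pm)}$ is constant on each block.
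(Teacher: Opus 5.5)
Your proposal is correct and follows essentially the same route as the paper, which obtains this statement from the converse direction of Proposition~\ref{prop:-1}: take $B=M$ and observe that $M^{-1}D^{(\pm)}M=D^{(\pm)}$ because $D^{(\pm)}$ is scalar on each diagonal block of $M$. Your remark that neither the nondecreasing ordering nor the distinctness of the block (net-)degrees is used is exactly the paper's stated reason for isolating this version.
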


\begin{remark}\label{rem:delete}
If two signed graphs are (net-)degree similar, then deleting all vertices
with a fixed (net-)degree from both graphs produces a pair of cospectral
signed graphs, since the conjugating matrix is block diagonal with
respect to the (net-)degree classes. In particular, in the degree case,
deleting all isolated vertices from both graphs produces a pair of
degree similar signed graphs, by Proposition~\ref{prop:0}, since the
degrees of the remaining vertices are unchanged.
\end{remark}

\section{Local switchings}\label{sec:4}
In the unsigned setting, the local switching introduced in \cite{GM} does not,
in general, preserve degree similarity; a construction based on it which does
was given in \cite{GS}. In this section we develop a similar construction for
signed graphs, starting from the local switching introduced in \cite{cos};
 even when restricted to unsigned graphs, it strictly generalizes the one
in \cite{GS} (see Subsection~\ref{sub:4.4}).
\\Consider a signed graph $\Gamma=(G,\sigma)$, with $G=(V_G,E_G)$.
Let $\pi=(C_0,C_1,\ldots,C_p)$ be a partition of the set $V_G$, such that $|C_i|=n_i$, for $i=0,\ldots,p$.
For $v \in V_G$, we denote by $d_i^+(v)$ and $d_i^-(v)$ the number of
positive and negative neighbors of $v$ in $C_i$, respectively, and set
$
d_i^\pm(v) := d_i^+(v) - d_i^-(v)$ and $d_i(v):=d_i^+(v) + d_i^-(v).
$

\subsection{Preserving net-degree similarity}\label{sec:4.1}
A partition $\pi = (C_0, C_1, \ldots, C_p)$ of $V_\Gamma$
is called a \emph{net-degree-preserving switching partition}
if the following conditions are satisfied.
\begin{enumerate}[i)]
\item For every $i = 1,2,\ldots,p$, for every $j =0,1,\ldots,p$  there exist integers $d^\pm_{i,j} \in \mathbb{Z}$
such that
$
d^\pm_{j}(v)=d^\pm_{i,j},
$
for every vertex $v \in C_i$.
\item For every $i = 1,2,\ldots,p$ and every $v \in C_0$  one of the following conditions holds:
\begin{enumerate}[(1)]
\item $d^\pm_i(v)=0$;
\item $(d^+_i(v),d^-_i(v))\in\{(n_i,0),(0,n_i), (n_i/2,0),(0,n_i/2)\}.$
\end{enumerate}
\end{enumerate}

If $\pi$ is a net-degree-preserving switching partition, we construct a new
signed graph $\Gamma^\pi$  as in the switching defined
in \cite{cos}. \\
When $v\in C_0$ and
$i \in \{1,2,\ldots,p\}$ are such that  ii) is satisfied via (1),
we switch the sign of every edge between $v$ and $C_i$.
When $v\in C_0$ and
$i \in \{1,2,\ldots,p\}$ are such that  ii) is satisfied via (2) and exactly half of the vertices of
$C_i$ are adjacent to $v$ (either all positively or all negatively), we delete
all edges between $v$ and $C_i$ and join $v$ to the vertices of
$C_i$ that were not adjacent to  $v$, with the same sign.
All other adjacencies remain unchanged.

\begin{theorem}\label{prop:ns}
The signed graphs $\Gamma$ and $\Gamma^\pi$ are net-degree similar.
\end{theorem}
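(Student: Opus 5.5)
The plan is to use the Godsil--McKay type matrix $M=\diag(I_{n_0},Q_1,\ldots,Q_p)$, where $Q_i=\frac{2}{n_i}J_{n_i}-I_{n_i}$, and then invoke Proposition~\ref{prop:0}. Each $Q_i$ is symmetric, orthogonal and involutive, so $M^{-1}=M$. The first step is to show that $\Gamma$ and $\Gamma^\pi$ have the same net-degree at every vertex, so that $D^\pm_\Gamma=D^\pm_{\Gamma^\pi}$ in a common ordering.

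For $v\in C_i$ with $i\ge1$, no edge incident to $v$ changes, so its net-degree is unchanged. For $v\in C_0$ we look at its contribution from each $C_i$:
\begin{itemize}
\item in case (1), the contribution is $0$, and after flipping all signs it is $-0=0$;
\item in case (2) with $n_i/2$ neighbours, the contribution is $\pm n_i/2$ before and after the operation;
\item in case (2) with $0$ or $n_i$ neighbours, nothing changes.
\end{itemize}
Hence the net-degree matrices coincide.

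Next we must verify that $M A_\Gamma M=A_{\Gamma^\pi}$ block by block. Write $A_{ij}$ for the block of $A_\Gamma$ between $C_i$ and $C_j$.
\begin{itemize}
\item The block $(0,0)$ is unchanged.
\item For $i,j\ge1$, condition i) says that $A_{ij}$ has constant row sums $d^\pm_{i,j}$. Since $A_{ij}^t=A_{ji}$, it also has constant column sums $d^\pm_{j,i}$, and counting entries gives $n_i d^\pm_{i,j}=n_j d^\pm_{j,i}$. Then $A_{ij}J=d^\pm_{i,j}J$ and $JA_{ij}=d^\pm_{j,i}J$. Expanding $Q_iA_{ij}Q_j$ gives $A_{ij}$ plus multiples of $J$, namely
\[
\Bigl(\tfrac{4 d^\pm_{i,j}}{n_i}-\tfrac{2d^\pm_{j,i}}{n_i}-\tfrac{2d^\pm_{i,j}}{n_j}\Bigr)J ,
\]
and this coefficient vanishes by the identity above.
\item For the block $(0,i)$, each row of $A_{0i}$ is a vector $x$ indexed by $C_i$, and we need $xQ_i=\frac{2}{n_i}(x\mathbf 1)\mathbf 1^t-x$ to be the new row. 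In case (1) we have $x\mathbf 1=0$, so the new row is $-x$, which is the sign flip. In case (2) with $x=\pm\mathbf 1$ we get $x$, unchanged. In case (2) with $x$ a $\pm$ indicator of half of $C_i$, we get $\pm\mathbf 1-x$, which is the complementary half with the same sign.
\end{itemize}
The block $(i,0)$ follows by symmetry. Since $M$ is block diagonal with respect to $\pi$, and the net-degrees agree vertexwise, Proposition~\ref{prop:0} applies. The blocks of $M$ need not be unions of full net-degree classes, but $M$ commutes with $D^\pm$ as long as each $C_i$ with $i\ge1$ has constant net-degree.

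The main obstacle is exactly this last point: we need every vertex of $C_i$, for $i\ge1$, to have the same net-degree. This follows from condition i), since $d^\pm(v)=\sum_j d^\pm_{i,j}$ for $v\in C_i$. With that, $M^{-1}D^\pm M=D^\pm$ holds directly, and the theorem follows.
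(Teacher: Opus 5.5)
\textbf{Verdict: there is a genuine gap, in the first step where you compare net-degrees on $C_1\cup\cdots\cup C_p$.} Your overall plan matches the paper: the same matrix $M$, a check that $MA_\Gamma M=A_{\Gamma^\pi}$, then Proposition~\ref{prop:0}. Your block-by-block verification of the conjugation is a correct, self-contained replacement for the paper's citation of \cite{cos}, with one slip. In the $J$-coefficient the first term should be $\frac{4d^\pm_{i,j}}{n_j}$ (equivalently $\frac{4d^\pm_{j,i}}{n_i}$), because $J_{n_i}A_{ij}J_{n_j}=n_i d^\pm_{i,j}J$. As you wrote it, the coefficient vanishes only when $n_i=n_j$ or $d^\pm_{i,j}=0$.

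You assert that for $v\in C_i$, $i\ge1$, no incident edge changes. This is false. The only edges the construction modifies are edges between $C_0$ and $C_1\cup\cdots\cup C_p$, and each of these has an endpoint in some $C_i$. A sign flip at $u\in C_0$ changes the sign of $uv$, and the half-rerouting creates or removes $uv$. For example, suppose $d^\pm_i(u)=0$, with $u$ having a positive neighbour $w_1$ and a negative neighbour $w_2$ in $C_i$. Flipping lowers $d^\pm(w_1)$ by $2$ and raises $d^\pm(w_2)$ by $2$.

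So $D^\pm_\Gamma=D^\pm_{\Gamma^\pi}$ on $C_1\cup\cdots\cup C_p$ is not proved. This is exactly where the $j=0$ part of condition i), namely that $d^\pm_0$ is constant on each $C_i$, must be used; the individual changes cancel only because of it. Your argument never uses this hypothesis for preservation, only for constancy.

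The repair is short with what you already have. By condition i) with $j=0$, $A_{i0}\mathbf 1=d^\pm_{i,0}\mathbf 1$, and $Q_i\mathbf 1=\mathbf 1$. Hence the new block $Q_iA_{i0}$ has the same constant row sums $d^\pm_{i,0}$, while the blocks $A_{ij}$ with $j\ge1$ are unchanged.

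Alternatively, use the paper's argument, which handles all vertices at once and makes your case analysis on $C_0$ unnecessary. We have $M\mathbf 1=\mathbf 1$, and since $\mathbf d^\pm_\Gamma$ is constant on each $C_i$, also $M\mathbf d^\pm_\Gamma=\mathbf d^\pm_\Gamma$. Therefore $\mathbf d^\pm_{\Gamma^\pi}=A_{\Gamma^\pi}\mathbf 1=MA_\Gamma M\mathbf 1=MA_\Gamma\mathbf 1=M\mathbf d^\pm_\Gamma=\mathbf d^\pm_\Gamma$.
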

\begin{proof}
Conditions i) and ii) imply the corresponding assumptions of
\cite[Theorem 3.2]{cos}, so that $A_{\Gamma^\pi}=Q^{-1}A_\Gamma Q$, where
$Q=\diag(I_{n_0},Q_{n_1},\ldots,Q_{n_p})$ and $Q_{n_i}=\frac{2}{n_i}J-I$. Observe that $Q^{-1}=Q$.
\\ By condition i)  one has, for every vertex $v\in C_i$ in $\Gamma$, with $i=1,\ldots,p$, that
\begin{equation*}\label{eq:sommagrado}
d^\pm(v)
=
\sum_{j=0}^{p} d_j^\pm(v)
=
\sum_{j=0}^{p} d_{i,j}^\pm.
\end{equation*}
Therefore the net-degree vector
$\mathbf d_\Gamma^\pm$ is constant on each $C_i$, for $i=1,\ldots,p$.
Since $Q$ fixes every vector with this property (see \cite[Lemma 3.1]{cos}), we have
$Q\mathbf 1=\mathbf 1$ and $Q\mathbf d_\Gamma^\pm=\mathbf d_\Gamma^\pm$, hence
$$
\mathbf d^\pm_{\Gamma^{\pi}}=A_{\Gamma^\pi}\mathbf 1=Q^{-1}A_\Gamma Q\mathbf 1=Q^{-1}A_\Gamma\mathbf 1
=Q^{-1}\mathbf d_\Gamma^\pm=Q\mathbf d_\Gamma^\pm=\mathbf d_\Gamma^\pm .
$$
The claim follows from Proposition~\ref{prop:0}, since
$D^\pm_\Gamma=D^\pm_{\Gamma^\pi}$ and the vertices corresponding to the same
block in
$
Q=\mathrm{diag}(\underbrace{1,\ldots,1}_{n_0},Q_{n_1},\ldots,Q_{n_p})
$
have the same net-degree.
\end{proof}

\begin{example}\label{exa:uni}
Consider the unicyclic signed graph $\Gamma$ on the left of
Fig.~\ref{fig:uni2}, and the partition $\pi=(C_0,C_1)$ of $V_\Gamma$ with
$$
C_0=\{1,2,3,4,5\},\qquad C_1=\{6,7,8,9\}.
$$
We have $d^\pm_{1,1}=0$ and $d^\pm_{1,0}=1$ so that condition i)
holds.  Moreover $d^\pm_1(1)=d^\pm_1(2)=d^\pm_1(3)=0$, whereas
$(d^+_1(4),d^-_1(4))=(d^+_1(5),d^-_1(5))=(2,0)=(n_1/2,0)$, so also condition
ii) holds. The signed graph $\Gamma^\pi$ is depicted on the right of
Fig.~\ref{fig:uni2}; observe that its underlying graph is not isomorphic
to that of $\Gamma$.
By Theorem~\ref{prop:ns}, the signed graphs $\Gamma$ and $\Gamma^\pi$ are net-degree
similar.  Moreover, $\Gamma$ and $\Gamma^\pi$ are not degree similar, since their
second Zagreb indices  are $47$ and $49$,
respectively (Proposition~\ref{prop:specht}).
\end{example}
\begin{figure}
\begin{centering}
\begin{tikzpicture}[scale=0.8,
  vertex/.style={circle, fill=black, minimum size=5pt, inner sep=0pt},
  hvertex/.style={circle, draw=black, fill=white, minimum size=5pt, inner sep=0pt, thick},
  pos edge/.style={thin},
  neg edge/.style={thin, dashed},
  lbl/.style={draw=none, fill=none, font=\small}]
\node[vertex]  (1) at (0,0)      {};
\node[vertex] (6) at (0,2)      {};
\node[vertex]  (4) at (2,2)      {};
\node[vertex] (8) at (2,0)      {};
\node[vertex]  (2) at (-1.6,2)   {};
\node[vertex]  (3) at (-3,2.7)   {};
\node[vertex] (7) at (-3,1.3)   {};
\node[vertex]  (5) at (3.6,0)    {};
\node[vertex] (9) at (5.2,0)    {};
\node[lbl,below] at (0,0)     {$1$};
\node[lbl,above] at (0,2)     {$6$};
\node[lbl,above] at (2,2)     {$4$};
\node[lbl,below] at (2,0)     {$8$};
\node[lbl,above] at (-1.6,2)  {$2$};
\node[lbl,left]  at (-3,2.7)  {$3$};
\node[lbl,left]  at (-3,1.3)  {$7$};
\node[lbl,below] at (3.6,0)   {$5$};
\node[lbl,below] at (5.2,0)   {$9$};
\draw[pos edge] (1) -- (6);
\draw[neg edge] (1) -- (8);
\draw[neg edge] (6) -- (2);
\draw[pos edge] (6) -- (4);
\draw[pos edge] (2) -- (7);
\draw[pos edge] (2) -- (3);
\draw[pos edge] (4) -- (8);
\draw[pos edge] (8) -- (5);
\draw[pos edge] (5) -- (9);
\end{tikzpicture}
\qquad
\begin{tikzpicture}[scale=0.8,
  vertex/.style={circle, fill=black, minimum size=5pt, inner sep=0pt},
  hvertex/.style={circle, draw=black, fill=white, minimum size=5pt, inner sep=0pt, thick},
  pos edge/.style={thin},
  neg edge/.style={thin, dashed},
  lbl/.style={draw=none, fill=none, font=\small}]
\node[vertex]  (5) at (0,0)      {};
\node[vertex] (6) at (0,2)      {};
\node[vertex]  (2) at (2,2)      {};
\node[vertex] (7) at (2,0)      {};
\node[vertex]  (1) at (-1.6,2)   {};
\node[vertex] (8) at (-3.2,2)   {};
\node[vertex]  (3) at (3.6,2)    {};
\node[vertex]  (4) at (3.6,0)    {};
\node[vertex] (9) at (5.2,0)    {};
\node[lbl,below] at (0,0)     {$5$};
\node[lbl,above] at (0,2)     {$6$};
\node[lbl,above] at (2,2)     {$2$};
\node[lbl,below] at (2,0)     {$7$};
\node[lbl,above] at (-1.6,2)  {$1$};
\node[lbl,above] at (-3.2,2)  {$8$};
\node[lbl,above] at (3.6,2)   {$3$};
\node[lbl,below] at (3.6,0)   {$4$};
\node[lbl,below] at (5.2,0)   {$9$};
\draw[neg edge] (1) -- (6);
\draw[pos edge] (1) -- (8);
\draw[pos edge] (2) -- (6);
\draw[neg edge] (2) -- (7);
\draw[pos edge] (2) -- (3);
\draw[pos edge] (4) -- (7);
\draw[pos edge] (4) -- (9);
\draw[pos edge] (5) -- (6);
\draw[pos edge] (5) -- (7);
\end{tikzpicture}
\end{centering}
\caption{The net-degree similar unicyclic signed graphs of Example~\ref{exa:uni}.}\label{fig:uni2}
\end{figure}

\subsection{Preserving degree similarity}\label{sec:4.2}
A partition $\pi = (C_0, C_1, \ldots, C_p)$ of $V_\Gamma$
is called a \emph{degree-preserving switching partition}
if the following conditions are satisfied.
\begin{enumerate}[i)]
\item For every $i,j =1,2,\ldots,p$  there exist integers $d^+_{i,j},d^-_{i,j}\in \mathbb{Z}$
such that $(d^+_j(v),d^-_j(v))=(d^+_{i,j},d^-_{i,j})$
for every vertex $v \in C_i$.
\item For every $j = 1,2,\ldots,p$, one of the following conditions holds:
\begin{enumerate}[(1)]
\item $d^\pm_j(v)=0$ for every $v \in C_0$;
\item $(d^+_j(v),d^-_j(v))\in\{(n_j,0),(0,n_j), (n_j/2,0),(0,n_j/2),(n_j/2,n_j/2), (0,0)\}$ for every $v \in C_0$.
\end{enumerate}
\item  For every $i = 1,2,\ldots,p$ there exists an integer $d_{i,0}\geq 0$ such that
$
d_{0}(v)=d_{i,0},
$
for every vertex $v \in C_i$.
\end{enumerate}

If $\pi$ is a degree-preserving switching partition, we define $\Gamma^\pi$
exactly as in Section~\ref{sec:4.1}, that is,
 for every $v\in C_0$ and every $j\in\{1,\ldots,p\}$,
if $d^\pm_j(v)=0$, we switch the sign of every edge between $v$ and $C_j$;
if $(d^+_j(v),d^-_j(v))\in\{(n_j/2,0),(0,n_j/2)\}$, we delete all edges
between $v$ and $C_j$ and join $v$ to the $n_j/2$ vertices of $C_j$ that
were not adjacent to $v$, with the same sign. All other adjacencies remain
unchanged.

Notice that the argument used in the proof of
Theorem~\ref{prop:ns} in the net-degree case relies on the identity $\mathbf d^\pm_\Gamma
= A_\Gamma\mathbf 1$; the degree vector admits no such expression in terms
of $A_\Gamma$, hence the conjugation $A_{\Gamma^\pi}=Q^{-1}A_\Gamma Q$
gives no direct information on it, and the degrees must be tracked along
the switching. Moreover, condition ii) is now required to hold uniformly on
$C_0$ for each $j$.

\begin{theorem}\label{prop:ds}
The signed graphs $\Gamma$ and $\Gamma^\pi$ are degree similar.
\end{theorem}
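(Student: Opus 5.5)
Following the proof of Theorem~\ref{prop:ns}, we first note that the conditions of a degree-preserving switching partition imply the hypotheses of \cite[Theorem~3.2]{cos}. Hence $A_{\Gamma^\pi}=Q^{-1}A_\Gamma Q$ with $Q=\diag(I_{n_0},Q_{n_1},\ldots,Q_{n_p})$, $Q_{n_i}=\frac{2}{n_i}J-I$ and $Q=Q^{-1}$. Conditions i) and ii) only need to be checked against the hypotheses in \cite{cos}. Condition i) gives constant signed counts between the cells $C_i$, $C_j$ with $i,j\ge1$. Condition ii) gives the allowed behaviour of a vertex of $C_0$ towards each $C_j$. The extra patterns $(n_j/2,n_j/2)$ and $(0,0)$ have net count zero, so they fall under case (1) of \cite{cos}. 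Once $D_\Gamma=D_{\Gamma^\pi}$ is established for the same vertex ordering, Proposition~\ref{prop:0} finishes the proof. Indeed, $Q$ is block diagonal, so it only remains to check that all vertices within each block $C_i$, $i\ge1$, have the same degree. The blocks of $I_{n_0}$ are $1\times1$ and impose nothing.

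Constancy of degree on $C_i$ for $i\ge1$ is a direct count. For $v\in C_i$ we have
$d(v)=d_{0}(v)+\sum_{j=1}^p\big(d^+_j(v)+d^-_j(v)\big)=d_{i,0}+\sum_{j\ge1}(d^+_{i,j}+d^-_{i,j})$,
by conditions iii) and i). This is constant on $C_i$.

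The real work is showing $d_{\Gamma^\pi}(v)=d_\Gamma(v)$ for every $v$. Unlike the net-degree case, this cannot be read off from $A_\Gamma\mathbf 1$, so we track edges along the switching. The switching changes edges only between $C_0$ and the cells $C_j$. A vertex $v\in C_0$ either only has signs reversed (case (1), including $(n_j/2,n_j/2)$ and $(0,0)$), or exchanges its $n_j/2$ neighbours in $C_j$ for the other $n_j/2$ with the same sign. In both cases $d_j(v)$ is unchanged for every $j$, so $d_{\Gamma^\pi}(v)=d_\Gamma(v)$.

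For $u\in C_j$ with $j\ge1$, the edges inside $\bigcup_{i\ge1}C_i$ are untouched, so only $d_0(u)$ may change. Here the uniformity in ii) is crucial. For fixed $j$, every $v\in C_0$ is either sign-reversed towards $C_j$, which does not affect $u$'s count, or fully adjacent to $C_j$ or not adjacent at all, which is unchanged. Otherwise $v$ is half-adjacent, and its neighbourhood in $C_j$ is replaced by the complementary half. So the number of $C_0$-neighbours of $u$ in $\Gamma^\pi$ equals
$d_0(u)-h(u)+\big(m_j-h(u)\big)$,
where $m_j$ is the number of half-adjacent vertices of $C_0$ towards $C_j$ and $h(u)$ is the number of them adjacent to $u$.

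This is the step I expect to be the main obstacle: one must show $h(u)=m_j/2$ for every $u\in C_j$. I would get it by double counting. The number of edges between $C_j$ and the half-adjacent vertices is $m_jn_j/2$. Condition iii) says $d_0(u)=d_{j,0}$ is constant on $C_j$. The contribution of the fully adjacent vertices and of the sign-reversed vertices with pattern $(n_j/2,n_j/2)$ must also be controlled. If constancy of $d_0$ alone does not force $h(u)$ to be constant, I would instead use the conjugation $A_{\Gamma^\pi}=QA_\Gamma Q$. Applying it to the $(C_j,C_0)$ block, $Q_{n_j}$ acts on the half-adjacent columns $\pm\chi_S$ as $\chi_S\mapsto\chi_{C_j\setminus S}$. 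Combining the entrywise absolute counts with constancy of $d_0$ on $C_j$ should then give that $u$'s count is preserved. With $D_{\Gamma^\pi}=D_\Gamma$ established, Proposition~\ref{prop:0} yields degree similarity.
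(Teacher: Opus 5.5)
Your plan follows the paper's proof: obtain $A_{\Gamma^\pi}=Q^{-1}A_\Gamma Q$ from the switching theorem of \cite{cos}, use i) and iii) to get constant degree on each $C_i$, check that vertices of $C_0$ keep their degree, reduce the case $u\in C_j$ to proving $h(u)=m_j/2$, and conclude with Proposition~\ref{prop:0}. The gap is that you leave exactly this last step open ("if constancy of $d_0$ alone does not force $h(u)$ to be constant, I would instead use the conjugation\ldots"). The fallback is not an argument. The identity $A_{\Gamma^\pi}=QA_\Gamma Q$ only encodes the definition of $\Gamma^\pi$. It says nothing about how the half-adjacent vertices of $C_0$ are spread over the neighbourhoods of the vertices of $C_j$. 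So degree preservation on $C_j$, which is the heart of the theorem, is not established in your write-up.

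The missing observation is short, and it shows your worry is unfounded. Split according to how ii) holds for the given $j$.
\begin{itemize}
\item If ii) holds via (1), there are no half-adjacent vertices ($m_j=0$) and only signs change between $C_0$ and $C_j$, so there is nothing to prove. Here the sign-reversed vertices may have any pattern $(a,a)$, so their contribution to $d_0(u)$ need not be constant, but this is irrelevant.
\item If ii) holds via (2), every vertex of $C_0$ that is not half-adjacent has pattern $(n_j,0)$, $(0,n_j)$ or $(n_j/2,n_j/2)$, and is then adjacent to \emph{every} vertex of $C_j$; otherwise it has pattern $(0,0)$ and is adjacent to none. Let $F_j$ be the set of vertices of the first kind. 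Then $d_0(u)=|F_j|+h(u)$ for all $u\in C_j$, and condition iii) makes $h(u)=d_{j,0}-|F_j|$ constant on $C_j$. Counting edges between $C_j$ and the $m_j$ half-adjacent vertices gives $n_j\,h(u)=m_jn_j/2$, so $h(u)=m_j/2$. Your formula $d_0(u)-h(u)+(m_j-h(u))$ then equals $d_0(u)$.
\end{itemize}
This is precisely the paper's argument via the partition $C_0=F_j\cup H_j\cup Z_j$. The vertices with pattern $(n_j/2,n_j/2)$ that you flagged as needing control are simply members of $F_j$.
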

\begin{proof}
Conditions i) and ii) imply the assumptions of \cite[Theorem 3.2]{cos}, so that $A_{\Gamma^\pi}=Q^{-1}A_\Gamma Q$ with the same
matrix $Q=\diag(I_{n_0},Q_{n_1},\ldots,Q_{n_p})$.
For every $v\in C_i$, with $i=1,\ldots,p$, conditions i) and iii) imply
\begin{equation*}\label{eq:sommagrado2}
d(v)=d_0(v)+\sum_{j=1}^{p}\big(d_j^+(v)+d_j^-(v)\big)
=d_{i,0}+\sum_{j=1}^{p}\big(d^+_{i,j}+d^-_{i,j}\big),
\end{equation*}
so that  all vertices in $C_i$, for $i=1,\ldots, p$, have the same
degree in $\Gamma$. Since
$$
Q=\mathrm{diag}(\underbrace{1,\ldots,1}_{n_0},Q_{n_1},\ldots,Q_{n_p}),
$$
by Proposition~\ref{prop:0} the claim follows once we prove that
$D_\Gamma=D_{\Gamma^\pi}$.
Let $v\in C_0$. Each operation performed on the edges between $v$
and some $C_j$ either changes signs only, or replaces the $n_j/2$ edges
from $v$ to $C_j$ with $n_j/2$ edges from $v$ to $C_j$; hence the degree
of $v$ is unchanged.
\\ Now let $w\in C_j$, with $j\in\{1,\ldots,p\}$. The adjacencies of $w$
with vertices of $C_1\cup\cdots\cup C_p$ are unchanged, so we only need
to look at the edges between $w$ and $C_0$. If $d^\pm_j(v)=0$ for every
$v\in C_0$ (condition ii) holds via (1)) then only sign changes occur between $C_0$ and $C_j$, and the
degree of $w$ is preserved. Otherwise condition ii) holds via (2), and
$C_0$ is the disjoint union of
\[
F_j=\{v\in C_0: d_j(v)=n_j\},\quad
H_j=\{v\in C_0: d_j(v)=n_j/2\},\quad
Z_j=\{v\in C_0: d_j(v)=0\}.
\]
Every $w\in C_j$ is adjacent to all vertices of $F_j$ and to no vertex of
$Z_j$, hence by condition iii)
\[
|F_j|+|H_j\cap N(w)|=d_0(w)=d_{j,0}\qquad\text{for every } w\in C_j .
\]
Summing over $w\in C_j$ and counting the edges between $C_0$ and $C_j$, we get
$$n_j|F_j|+\tfrac{n_j}{2}|H_j|=n_jd_{j,0},$$ that is
$d_{j,0}-|F_j|=|H_j|/2$. Therefore
\[
|H_j\cap N(w)|=\tfrac12|H_j|=|H_j\setminus N(w)|
\qquad\text{for every } w\in C_j .
\]
In $\Gamma^\pi$ the vertex $w$ is still adjacent to all vertices of
$F_j$ (possibly with different signs), to no vertex of $Z_j$, and to the
vertices of $H_j\setminus N(w)$ instead of those of $H_j\cap N(w)$.
Hence the degree of $w$ is preserved, and $D_\Gamma=D_{\Gamma^\pi}$.
\end{proof}

\begin{figure}
\begin{centering}
\begin{tikzpicture}[scale=0.7,
  vertex/.style={circle, fill=black, minimum size=5pt, inner sep=0pt},
  pos edge/.style={thin},
  neg edge/.style={thin, dashed},
  lbl/.style={draw=none, fill=none, font=\small}
]
\node[vertex] (11) at (6, 4) {};
\node[vertex] (10) at (6, 0) {};
\node[vertex] (7)  at (4, 4) {};
\node[vertex] (2)  at (5, 2) {};
\node[vertex] (6)  at (4, 0) {};
\node[vertex] (3)  at (3, 2) {};
\node[vertex] (5)  at (2, 4) {};
\node[vertex] (9)  at (0, 4) {};
\node[vertex] (1)  at (1, 2) {};
\node[vertex] (8)  at (0, 0) {};
\node[vertex] (4)  at (2, 0) {};
\node[lbl,right] at (6,4) {$11$};
\node[lbl,right] at (6,0) {$10$};
\node[lbl,left]  at (4,4) {$7$};
\node[lbl,right] at (5,2) {$2$};
\node[lbl,left]  at (4,0) {$6$};
\node[lbl,left]  at (3,2) {$3$};
\node[lbl,right] at (2,4) {$5$};
\node[lbl,left]  at (0,4) {$9$};
\node[lbl,left]  at (1,2) {$1$};
\node[lbl,left]  at (0,0) {$8$};
\node[lbl,right] at (2,0) {$4$};
\draw[pos edge] (11) -- (7);
\draw[pos edge] (2) -- (6);
\draw[pos edge] (10) -- (6);
\draw[neg edge] (11) -- (2);
\draw[neg edge] (10) -- (2);
\draw[neg edge] (7) -- (2);
\draw[pos edge] (9) -- (1);
\draw[pos edge] (8) -- (1);
\draw[pos edge] (4) -- (1);
\draw[pos edge] (4) -- (8);
\draw[pos edge] (5) -- (9);
\draw[pos edge] (3) -- (4);
\draw[pos edge] (3) -- (6);
\draw[neg edge] (5) -- (1);
\draw[neg edge] (3) -- (5);
\draw[neg edge] (3) -- (7);
\draw[pos edge] (2) to [bend left=30] (1);
\end{tikzpicture}
\qquad
\begin{tikzpicture}[scale=0.7,
  vertex/.style={circle, fill=black, minimum size=5pt, inner sep=0pt},
  pos edge/.style={thin},
  neg edge/.style={thin, dashed},
  lbl/.style={draw=none, fill=none, font=\small}
]
\node[vertex] (11) at (6, 4) {};
\node[vertex] (10) at (6, 0) {};
\node[vertex] (7)  at (4, 4) {};
\node[vertex] (2)  at (1, 2) {};
\node[vertex] (6)  at (4, 0) {};
\node[vertex] (3)  at (3, 2) {};
\node[vertex] (5)  at (2, 4) {};
\node[vertex] (9)  at (0, 4) {};
\node[vertex] (1)  at (5, 2) {};
\node[vertex] (8)  at (0, 0) {};
\node[vertex] (4)  at (2, 0) {};
\node[lbl,right] at (6,4) {$11$};
\node[lbl,right] at (6,0) {$10$};
\node[lbl,left]  at (4,4) {$7$};
\node[lbl,right] at (5,2) {$1$};
\node[lbl,left]  at (4,0) {$6$};
\node[lbl,left]  at (3,2) {$3$};
\node[lbl,right] at (2,4) {$5$};
\node[lbl,left]  at (0,4) {$9$};
\node[lbl,left]  at (1,2) {$2$};
\node[lbl,left]  at (0,0) {$8$};
\node[lbl,right] at (2,0) {$4$};
\draw[pos edge] (11) -- (7);
\draw[neg edge] (2) -- (6);
\draw[pos edge] (10) -- (6);
\draw[neg edge] (9) -- (2);
\draw[neg edge] (8) -- (2);
\draw[pos edge] (7) -- (2);
\draw[pos edge] (11) -- (1);
\draw[pos edge] (10) -- (1);
\draw[neg edge] (4) -- (1);
\draw[pos edge] (4) -- (8);
\draw[pos edge] (5) -- (9);
\draw[neg edge] (3) -- (4);
\draw[neg edge] (3) -- (6);
\draw[pos edge] (5) -- (1);
\draw[pos edge] (3) -- (5);
\draw[pos edge] (3) -- (7);
\draw[pos edge] (2) to [bend right=30] (1);
\end{tikzpicture}
\end{centering}
\caption{The degree similar signed graphs $\Gamma$ and $\Gamma^\pi$ of Example~\ref{exa:2}.}\label{fig:2}
\end{figure}

\begin{example}\label{exa:2}
Consider the signed graph $\Gamma$ on the left of Fig.~\ref{fig:2}, and the
partition $\pi=(C_0,C_1,C_2)$ of its vertex set $V_\Gamma$ such that
$$
C_0=\{1,2,3\},\; C_1=\{4,5,6,7\}, \;C_2=\{8,9,10,11\}.
$$
Condition i) is satisfied with
$(d_{1,1}^+,d_{1,1}^-)=(d_{2,2}^+,d_{2,2}^-)=(0,0)$ and
$(d_{1,2}^+,d_{1,2}^-)=(d_{2,1}^+,d_{2,1}^-)=(1,0)$.
Condition iii) is satisfied with $d_{1,0}=2$ and $d_{2,0}=1$.
Concerning Condition ii), for $j=1$ we have
$d_1^\pm(1)=d_1^\pm(2)=d_1^\pm(3)=0$, so that the point (1) holds. For
$j=2$ observe that $n_2=4$ and
$$
(d_2^+(1),d_2^-(1))=(2,0), \quad (d_2^+(2),d_2^-(2))=(0,2), \quad
(d_2^+(3),d_2^-(3))=(0,0),
$$
so that the point (2) holds. By Theorem~\ref{prop:ds}, the signed graph
$\Gamma^\pi$ on the right of Fig.~\ref{fig:2} is degree similar to
$\Gamma$. However, their underlying graphs are not isomorphic, since the
first one contains triangles whereas the second one is triangle-free.
Also, observe that $\Gamma$ and $\Gamma^\pi$ are not net-degree similar by Proposition~\ref{prop:specht}, since $\operatorname{tr}\big(A_\Gamma D^\pm_\Gamma A_\Gamma^2 D^\pm_\Gamma\big)=48$ whereas $\operatorname{tr}\big(A_{\Gamma^\pi} D^\pm_{\Gamma^\pi} A_{\Gamma^\pi}^2 D^\pm_{\Gamma^\pi}\big)=0$.
\end{example}

\subsection{Preserving net-degree and degree similarity}
It follows from Sections \ref{sec:4.1} and \ref{sec:4.2}  that a
 partition $\pi = (C_0, C_1, \ldots, C_p)$ of $V_\Gamma$
is both net-degree-preserving  and degree-preserving switching if the following conditions are satisfied.
\begin{enumerate}[i)]
\item For every $i =1,2,\ldots,p$, for every $j=0,1,2,\ldots,p$  there exist integers $d^+_{i,j},d^-_{i,j}\in \mathbb{Z}$
such that $(d^+_j(v),d^-_j(v))=(d^+_{i,j},d^-_{i,j})$
for every vertex $v \in C_i$.
\item For every $j = 1,2,\ldots,p$, one of the following conditions holds:
\begin{enumerate}[(1)]
\item $d^\pm_j(v)=0$ for every $v \in C_0$;
\item $(d^+_j(v),d^-_j(v))\in\{(n_j,0),(0,n_j), (n_j/2,0),(0,n_j/2),(n_j/2,n_j/2), (0,0)\}$ for every $v \in C_0$.
\end{enumerate}
\end{enumerate}
It then follows from Theorems~\ref{prop:ns} and~\ref{prop:ds} that, for
any such partition $\pi$, the signed graphs $\Gamma$ and $\Gamma^\pi$
are simultaneously net-degree similar and degree similar, with the same
conjugating matrix.

\begin{example}\label{exa:3}
Consider the signed graph $\Gamma$ on the left of Fig.~\ref{fig:3}, and the partition $\pi=(C_0,C_1)$ of its vertex set $V_\Gamma$ such that
$$
C_0=\{1,2,3\},\; C_1=\{4,5,6,7,8,9\}.
$$
We have:
$
(d^+_{1,0},d^-_{1,0})=(1,1)$, $(d^+_{1,1},d^-_{1,1})=(2,1)$,
$
(d_1^+(1),d_1^-(1))=(d_1^+(2),d_1^-(2))=(3,0)$ and  $(d_1^+(3),d_1^-(3))=(0,6).$
In particular, the graph $\Gamma^\pi$ on the right of Fig.~\ref{fig:3} is both net-degree similar and degree similar with the signed graph $\Gamma$.
Observe that the underlying graphs of $\Gamma$ and $\Gamma^\pi$ are isomorphic. A direct computation shows that  the multisets of the diagonal
entries of the fourth powers of the adjacency matrices of the two signed
graphs are different. Each such entry is the difference between the numbers
of positive and negative closed walks of length four based at the
corresponding vertex. Hence, the multiset of these values over all vertices
is an invariant under switching isomorphism, and the two signed graphs are
not switching isomorphic.
\end{example}

\begin{figure}
\begin{centering}
\begin{tikzpicture}[scale=0.7,
  vertex/.style={circle, fill=black, minimum size=5pt, inner sep=0pt},
  pos edge/.style={thin},
  neg edge/.style={thin, dashed},
  lbl/.style={draw=none, fill=none, font=\small}
]
\node[vertex] (4) at (0.00, 0.0) {};
\node[vertex] (3) at (5.20, 0.0) {};
\node[vertex] (1) at (5.20, 3.3) {};
\node[vertex] (2) at (0.00, 3.3) {};
\node[vertex] (5) at (2.60, 1.0) {};
\node[vertex] (C) at (3.90, 1.9) {};
\node[vertex] (6) at (2.60, 2.3) {};
\node[vertex] (B) at (1.30, 1.4) {};
\node[vertex] (A) at (6.50, 4.5) {};
\node[lbl,below] at (0.00, 0.0)  {$7$};
\node[lbl,below] at (5.20, 0.0)  {$6$};
\node[lbl,below right] at (5.20, 3.3) {$4$};
\node[lbl,above left] at (0.00, 3.3) {$5$};
\node[lbl,right] at (3.90, 1.9) {$3$};
\node[lbl,below] at (2.60, 1.0) {$8$};
\node[lbl,above] at (1.30, 1.4) {$2$};
\node[lbl,above left] at (2.60, 2.3) {$9$};
\node[lbl,above] at (6.50, 4.5) {$1$};
\draw[pos edge] (1) -- (2);
\draw[pos edge] (2) -- (4);
\draw[pos edge] (4) -- (3);
\draw[neg edge] (1) -- (3);
\draw[pos edge] (A) -- (2);
\draw[pos edge] (A) -- (1);
\draw[pos edge] (A) -- (3);
\draw[pos edge] (6) -- (5);
\draw[pos edge] (B) -- (5);
\draw[pos edge] (B) -- (6);
\draw[pos edge] (B) -- (C);
\draw[neg edge] (6) -- (C);
\draw[neg edge] (C) -- (5);
\draw[neg edge] (1) -- (C);
\draw[pos edge] (1) -- (6);
\draw[neg edge] (2) -- (6);
\draw[pos edge] (B) -- (4);
\draw[neg edge] (4) -- (5);
\draw[pos edge] (5) -- (3);
\draw[neg edge] (3) -- (C);
\draw[neg edge] (C) to [bend right=22] (2);
\draw[neg edge] (C) to [bend left=40] (4);
\end{tikzpicture}
\qquad
\begin{tikzpicture}[scale=0.7,
  vertex/.style={circle, fill=black, minimum size=5pt, inner sep=0pt},
  pos edge/.style={thin},
  neg edge/.style={thin, dashed},
  lbl/.style={draw=none, fill=none, font=\small}
]
\node[vertex] (4) at (0.00, 0.0) {};
\node[vertex] (3) at (5.20, 0.0) {};
\node[vertex] (1) at (5.20, 3.3) {};
\node[vertex] (2) at (0.00, 3.3) {};
\node[vertex] (5) at (2.60, 1.0) {};
\node[vertex] (C) at (3.90, 1.9) {};
\node[vertex] (6) at (2.60, 2.3) {};
\node[vertex] (B) at (1.30, 1.4) {};
\node[vertex] (A) at (6.50, 4.5) {};
\node[lbl,below] at (0.00, 0.0)  {$5$};
\node[lbl,below] at (5.20, 0.0)  {$9$};
\node[lbl,below right] at (5.20, 3.3) {$8$};
\node[lbl,above left] at (0.00, 3.3) {$7$};
\node[lbl,right] at (3.90, 1.9) {$3$};
\node[lbl,below] at (2.60, 1.0) {$4$};
\node[lbl,above] at (1.30, 1.4) {$2$};
\node[lbl,above left] at (2.60, 2.3) {$6$};
\node[lbl,above] at (6.50, 4.5) {$1$};
\draw[neg edge] (1) -- (2);
\draw[pos edge] (2) -- (4);
\draw[neg edge] (4) -- (3);
\draw[pos edge] (1) -- (3);
\draw[pos edge] (A) -- (2);
\draw[pos edge] (A) -- (1);
\draw[pos edge] (A) -- (3);
\draw[neg edge] (6) -- (5);
\draw[pos edge] (B) -- (5);
\draw[pos edge] (B) -- (6);
\draw[pos edge] (B) -- (C);
\draw[neg edge] (6) -- (C);
\draw[neg edge] (C) -- (5);
\draw[neg edge] (1) -- (C);
\draw[pos edge] (1) -- (6);
\draw[pos edge] (2) -- (6);
\draw[pos edge] (B) -- (4);
\draw[pos edge] (4) -- (5);
\draw[pos edge] (5) -- (3);
\draw[neg edge] (3) -- (C);
\draw[neg edge] (C) to [bend right=22] (2);
\draw[neg edge] (C) to [bend left=40] (4);
\end{tikzpicture}
\end{centering}
\caption{The net-degree similar and degree similar signed graphs $\Gamma$ and $\Gamma^\pi$ of Example~\ref{exa:3}.}\label{fig:3}
\end{figure}

\subsection{All positive (all negative) case}\label{sub:4.4}
A special case of partitions that are simultaneously net-degree-preserving
and degree-preserving switching occurs when the graph is entirely positive
(or entirely negative). In this case each of the two definitions is
equivalent to the simultaneous satisfaction of the following two
conditions:
\begin{enumerate}[i)]
\item For every $i = 1,2,\ldots,p$, for every $j =0,1,\ldots,p$ there exist
integers $d_{i,j} \in \mathbb{Z}$ such that $d_{j}(v)=d_{i,j}$ for every
vertex $v \in C_i$.
\item For every $i = 1,2,\ldots,p$ and every $v \in C_0$ one has
$d_i(v)\in \{0, n_i/2, n_i\}$.
\end{enumerate}
These are therefore conditions for switching of unsigned graphs ensuring
that the switched graph is degree similar to the original one, where
degree similarity can be understood either in the sense of
Definition~\ref{def:prima} or in the sense of \cite{GS}.

Conditions~i) and~ii) extend the GM$^*$ property introduced by Haemers and
Spence~\cite{HS} from partitions into two parts to partitions with several
parts. In~\cite{HS} it is
observed that GM$^*$ switching produces graphs that are cospectral with
respect to every matrix of the form $\alpha A_G+\beta I+\delta D_G$. It follows immediately from their
argument that such graphs are in fact degree similar. In particular, the column GM$^*$ of
\cite[Table~1]{HS} gives lower bounds for the number of graphs on at most
$11$ vertices that are not determined by their degree similarity class.
The conditions of \cite[Lemma~9]{GS} are a special case of i) and ii), in
which every vertex of $C_0$ has exactly $n_i/2$ neighbours in each $C_i$,
and every vertex of $C_i$ has exactly $|C_0|/2$ neighbours in $C_0$.
Example~\ref{exa:4} shows that this inclusion is strict.

\begin{example}\label{exa:4}
Consider the all-positive signed graph $G$ on the left of
Fig.~\ref{fig:4}, and the partition $\pi=(C_0,C_1)$ of its vertex set with
$C_0=\{1,2,3\}$ and $C_1=\{4,5,\ldots,11\}$. Then $d_{1,0}=1$,
$d_{1,1}=3$, $d_1(1)=d_1(2)=4=|C_1|/2$ and $d_1(3)=0$, so that conditions
i) and ii) are satisfied, whereas those of \cite[Lemma~9]{GS} are not,
since vertex $3$ has no neighbours in $C_1$. The switched graph $G^\pi$ is depicted on the right of Fig.~\ref{fig:4}. The graphs $G$ and $G^\pi$ are degree similar but not
isomorphic, since in the cubic subgraph induced by $C_1$ the subsets
$\{4,5,6,7\}$ and $\{8,9,10,11\}$ do not belong to the same orbit of the
automorphism group.
\end{example}

\begin{figure}
\begin{centering}
\begin{tikzpicture}[scale=0.75,
  vertex/.style={circle, fill=black, minimum size=5pt, inner sep=0pt},
  pos edge/.style={thin},
  neg edge/.style={thin, dashed},
  lbl/.style={draw=none, fill=none, font=\small}
]
\node[vertex] (1)  at (0, 1.5) {};
\node[vertex] (10) at (1, 3)   {};
\node[vertex] (9)  at (1, 1)   {};
\node[vertex] (8)  at (2, 0)   {};
\node[vertex] (11) at (2, 2)   {};
\node[vertex] (6)  at (4, 3)   {};
\node[vertex] (5)  at (4, 1)   {};
\node[vertex] (7)  at (5, 2)   {};
\node[vertex] (4)  at (5, 0)   {};
\node[vertex] (2)  at (6, 1.5) {};
\node[vertex] (3)  at (7, 1.5) {};
\node[lbl,left]        at (0, 1.5) {$1$};
\node[lbl,above right] at (1, 3)   {$10$};
\node[lbl,above right] at (1, 1)   {$9$};
\node[lbl,above right] at (2, 0)   {$8$};
\node[lbl,above]       at (2, 2)   {$11$};
\node[lbl,above right] at (4, 3)   {$6$};
\node[lbl,above right] at (4, 1)   {$5$};
\node[lbl,above right] at (5, 2)   {$7$};
\node[lbl,right]       at (5, 0)   {$4$};
\node[lbl,below right] at (6, 1.5) {$2$};
\node[lbl,below]       at (7, 1.5) {$3$};
\draw[pos edge] (1) -- (9);
\draw[pos edge] (1) -- (8);
\draw[pos edge] (1) -- (10);
\draw[pos edge] (1) -- (11);
\draw[pos edge] (9) -- (8);
\draw[pos edge] (9) -- (10);
\draw[pos edge] (8) -- (10);
\draw[pos edge] (10) -- (11);
\draw[pos edge] (11) -- (6);
\draw[pos edge] (7) -- (11);
\draw[pos edge] (5) -- (9);
\draw[pos edge] (4) -- (8);
\draw[pos edge] (4) -- (5);
\draw[pos edge] (4) -- (7);
\draw[pos edge] (6) -- (7);
\draw[pos edge] (6) -- (5);
\draw[pos edge] (6) -- (2);
\draw[pos edge] (5) -- (2);
\draw[pos edge] (4) -- (2);
\draw[pos edge] (7) -- (2);
\draw[pos edge] (3) -- (2);
\end{tikzpicture}
\qquad
\begin{tikzpicture}[scale=0.75,
  vertex/.style={circle, fill=black, minimum size=5pt, inner sep=0pt},
  pos edge/.style={thin},
  neg edge/.style={thin, dashed},
  lbl/.style={draw=none, fill=none, font=\small}
]
\node[vertex] (1)  at (0, 1.5)  {};
\node[vertex] (10) at (1, 3)    {};
\node[vertex] (9)  at (1, 1)    {};
\node[vertex] (8)  at (2, 0)    {};
\node[vertex] (11) at (2, 2)    {};
\node[vertex] (6)  at (4, 3)    {};
\node[vertex] (5)  at (4, 1)    {};
\node[vertex] (7)  at (5, 2)    {};
\node[vertex] (4)  at (5, 0)    {};
\node[vertex] (2)  at (6, 1.5)  {};
\node[vertex] (3)  at (-1, 1.5) {};
\node[lbl,below]       at (0, 1.5)  {$2$};
\node[lbl,above right] at (1, 3)    {$10$};
\node[lbl,above right] at (1, 1)    {$9$};
\node[lbl,above right] at (2, 0)    {$8$};
\node[lbl,above]       at (2, 2)    {$11$};
\node[lbl,above right] at (4, 3)    {$6$};
\node[lbl,above right] at (4, 1)    {$5$};
\node[lbl,above right] at (5, 2)    {$7$};
\node[lbl,right]       at (5, 0)    {$4$};
\node[lbl,below right] at (6, 1.5)  {$1$};
\node[lbl,below]       at (-1, 1.5) {$3$};
\draw[pos edge] (1) -- (9);
\draw[pos edge] (1) -- (8);
\draw[pos edge] (1) -- (10);
\draw[pos edge] (1) -- (11);
\draw[pos edge] (9) -- (8);
\draw[pos edge] (9) -- (10);
\draw[pos edge] (8) -- (10);
\draw[pos edge] (10) -- (11);
\draw[pos edge] (11) -- (6);
\draw[pos edge] (7) -- (11);
\draw[pos edge] (5) -- (9);
\draw[pos edge] (4) -- (8);
\draw[pos edge] (4) -- (5);
\draw[pos edge] (4) -- (7);
\draw[pos edge] (6) -- (7);
\draw[pos edge] (6) -- (5);
\draw[pos edge] (6) -- (2);
\draw[pos edge] (5) -- (2);
\draw[pos edge] (4) -- (2);
\draw[pos edge] (7) -- (2);
\draw[pos edge] (3) -- (1);
\end{tikzpicture}
\end{centering}
\caption{The degree similar graphs $G$ and $G^\pi$ of Example~\ref{exa:4}.}\label{fig:4}
\end{figure}

\section{Partial transposition}\label{sec:5}
In this section we employ a switching of a completely different kind to build \mbox{(net-)degree} similar signed graphs. Such a switching was first studied as a tool for generating cospectral graphs \cite{duttajaco} and, subsequently, cospectral signed graphs \cite{PT}.
\\Every real square matrix $A$ of order $qh$ can be viewed as a block matrix
$\{C_{i,j}\}_{i,j=1,\ldots,q}$ as follows:
\begin{equation}\label{eq:blocchi}
A = \left( \begin{array}{ccccc}
C_{1,1} & C_{1,2} & \ldots & C_{1,q-1} & C_{1,q} \\
    {C_{2,1}} & C_{2,2} & \ldots & C_{2, q-1} & C_{2,q} \\
    \vdots & \vdots & \ddots &  \vdots & \vdots \\
    {C_{q-1,1}} & {C_{q-1, 2}} & \ldots & C_{q-1,q-1} & C_{q-1,q} \\
    {C_{q,1}} & {C_{q,2}} & \ldots & {C_{q,q-1}} & C_{q,q}
    \end{array}   \right),
 \end{equation}
where each block $C_{i,j}$ is a square matrix of order $h$.
Given such a  block structure, one can define a new matrix,
called the \emph{partial transpose} of $A$, denoted by
$A^\tau$, as follows:
\begin{equation*}
\begin{split}
A^\tau &= \left( \begin{array}{ccccc}
{C_{1,1}}^t & {C_{1,2}}^t & \ldots & {C_{1,q-1}}^t & {C_{1,q}}^t \\
    {C_{2,1}}^t & {C_{2,2}}^t & \ldots & {C_{2, q-1}}^t & {C_{2,q}}^t \\
    \vdots & \vdots & \ddots &  \vdots & \vdots \\
    {C_{q-1,1}}^t & {C_{q-1, 2}}^t & \ldots & {C_{q-1,q-1}}^t & {C_{q-1,q}}^t \\
    {C_{q,1}}^t & {C_{q,2}}^t & \ldots & {C_{q,q-1}}^t & {C_{q,q}}^t
    \end{array}   \right).
\end{split}
 \end{equation*}
 Given a signed graph $\Gamma$ together with a block structure as in Eq.~\eqref{eq:blocchi} for its adjacency matrix
$A_\Gamma$  (which can be specified by an ordering of the vertices and a divisor of the order of the graph indicating the block size), the partial transpose $\Gamma^\tau$ is defined as the signed graph whose adjacency matrix is $A_\Gamma^\tau$.
 In \cite{duttajaco,PT}, sufficient conditions were given under which the partial transpose yields pairs of cospectral graphs and pairs of cospectral signed graphs, respectively. \\ We now give conditions ensuring that this operation preserves (net-)degree similarity. We begin with an observation, which is in fact a special case of Proposition~\ref{prop:0}.
 \begin{remark}\label{rem:pt}
Let $\Gamma$ be a signed graph whose adjacency matrix admits the block structure of Eq.~\eqref{eq:blocchi}. If the following conditions hold:
\begin{itemize}
\item all vertices belonging to the same block have the same (net-)degree;
\item there exists an invertible real matrix $V$ of order $h$ such that
$V^{-1}C_{i,j}V = {C_{i,j}}^t$ for every $i,j=1,\ldots,q$;
\item the (net-)degrees of $\Gamma^\tau$ coincide with those of $\Gamma$;
\end{itemize}
then $\Gamma$ and $\Gamma^\tau$ are (net-)degree similar. In this case, the matrix $M=\diag(V,\ldots,V)$ plays the role described in Proposition~\ref{prop:0}.
Observe that a necessary condition for $\Gamma$ and $\Gamma^\tau$ not to be switching isomorphic is the non-existence of a signed permutation simultaneously conjugating all blocks to their transposed (see \cite{PT}, where the self-converse property of signed digraphs is studied in connection with this problem).
\end{remark}
We now present some situations in which this approach can be successfully applied.
\begin{proposition}\label{prop:norm}
Let $\Gamma$ be a signed graph whose adjacency matrix is viewed with the block structure of Eq.~\eqref{eq:blocchi}.
Assume that all blocks $\{C_{i,j}\}$ belong to a family of normal and commuting matrices, and that the vertices within the same block have the same degree.
Then $\Gamma$ and $\Gamma^\tau$ are degree similar.
\end{proposition}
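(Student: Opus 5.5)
We plan to verify the three hypotheses of Remark~\ref{rem:pt}, with the degree matrix in place of $D^{(\pm)}$. The first one, that vertices in the same block have the same degree, is assumed directly. The remaining two are the existence of a common $V$ with $V^{-1}C_{i,j}V=C_{i,j}^t$, and the equality of the degrees of $\Gamma^\tau$ and $\Gamma$.

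\emph{Step 1: the common conjugating matrix.} The family $\{C_{i,j}\}$ consists of real normal commuting matrices. By the spectral theorem for commuting normal matrices, there is a unitary $U$ with $C_{i,j}=U\Lambda_{i,j}U^*$ and all $\Lambda_{i,j}$ diagonal. Since $C_{i,j}$ is real, $C_{i,j}^t=\overline{C_{i,j}^*}=\overline{U}\,\overline{\Lambda_{i,j}}\,U^t$. Moreover $C_{i,j}^*=U\overline{\Lambda_{i,j}}U^*$, and $C_{i,j}^*=C_{i,j}^t$ because the matrix is real. So we may take $W:=\overline{U}U^*$, unitary, giving
\[
W^{-1}C_{i,j}W=U\,\overline{U}^{\,t}\,\overline{U}\,\Lambda_{i,j}U^*\ldots
\]
A cleaner route is to argue as follows. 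The families $\{C_{i,j}\}$ and $\{C_{i,j}^t\}$ both consist of commuting normal matrices. They have the same joint spectrum, since $C^t=\overline{C^*}$, $C$ is real, and $C^*=U\overline\Lambda U^*$ while $C^t=\overline{U}\Lambda U^t$, so $C^t$ is diagonalized by $\overline U$ with the same $\Lambda_{i,j}$. Hence $W=U U^t$ (a unitary matrix) satisfies $W^*C_{i,j}W=C_{i,j}^t$ for all $i,j$ simultaneously.

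To pass to a real $V$, we invoke \cite[Theorem~5]{V}, as in the proof of Proposition~\ref{prop:specht}. Two families of real matrices that are simultaneously unitarily similar are simultaneously orthogonally similar. This yields a real orthogonal $V$ with $V^{-1}C_{i,j}V=C_{i,j}^t$ for all $i,j$. Alternatively, Proposition~\ref{prop:specht}'s Specht--Wiegmann argument gives the same result, but the explicit diagonalization is more direct.

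\emph{Step 2: degrees are preserved.} The degree of the $r$-th vertex in block row $i$ of $\Gamma^\tau$ is $\sum_j$ (number of nonzero entries in row $r$ of $C_{i,j}^t$), which equals $\sum_j$ (number of nonzero entries in column $r$ of $C_{i,j}$). The key observation is that a normal matrix has equal Euclidean norms of corresponding rows and columns: $(CC^t)_{rr}=(C^tC)_{rr}$. Since the entries are in $\{0,\pm1\}$, the squared norm of row $r$ of $C$ equals its number of nonzero entries. Hence row $r$ and column $r$ of each $C_{i,j}$ have the same number of nonzero entries. Summing over $j$ shows the degree of each vertex is unchanged. Note that the hypothesis that vertices in a block share a degree is not even needed for this step, only for Remark~\ref{rem:pt}.

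\emph{Conclusion.} With $M=\diag(V,\ldots,V)$, Remark~\ref{rem:pt} (equivalently Proposition~\ref{prop:0}) gives that $\Gamma$ and $\Gamma^\tau$ are degree similar. The main obstacle is Step 1: producing a single real conjugating matrix for all blocks at once. The careful point is checking that $\overline U$ diagonalizes each $C_{i,j}^t$ with the same diagonal $\Lambda_{i,j}$, so that one unitary works uniformly, before applying the unitary-to-orthogonal reduction.
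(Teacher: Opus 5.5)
Your proposal is correct and follows the paper's proof essentially step by step. The steps are: a single unitary that conjugates every block to its transpose; passage to a real orthogonal $V$ via the unitary-to-orthogonal result already used in the proof of Proposition~\ref{prop:specht}; the identity $(CC^t)_{rr}=(C^tC)_{rr}$ for normal $\{0,\pm1\}$-blocks, showing that row $r$ and column $r$ have the same number of nonzero entries; and the conclusion via Proposition~\ref{prop:0} with $\diag(V,\ldots,V)$. The one difference is that you construct the unitary explicitly as $W=UU^t$ from a simultaneous diagonalization, which checks out, whereas the paper only cites its existence. In a write-up, delete the truncated first attempt with $\overline{U}U^*$, which conjugates in the wrong direction.
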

\begin{proof}
Under the assumptions, there exists a unitary complex matrix $U$ such that
$U^{-1}C_{i,j}U={C_{i,j}}^t$
for all $i,j=1,\ldots,q$
(see, for example, the proof of \cite[Theorem~3.2]{PT}).
Then, by \cite[Theorem~5]{V}, there exists a real orthogonal matrix $V$ such that
$V^{t}C_{i,j}V={C_{i,j}}^t$
for all $i,j=1,\ldots,q$. Let us put  $Q=\diag(V,\ldots,V)$. One can easily check that $A_{\Gamma^\tau}=Q^{t}A_{\Gamma}Q$. It remains to show that the degree of each vertex is the same in
$\Gamma$ and $\Gamma^\tau$. Observe that if $B$ is a real normal matrix of order $h$, then, for every
$r=1,\ldots,h,$
$$
\sum_{s=1}^h (B)_{sr}^2
=
\sum_{s=1}^h (B^t)_{rs}(B)_{sr}
=
(B^t B)_{rr}
=
(BB^t)_{rr}
=
\sum_{s=1}^h (B)_{rs}(B^t)_{sr}
=
\sum_{s=1}^h (B)_{rs}^2.
$$
Since the block matrices $\{C_{i,j}\}$ have entries in $\{-1,0,1\}$, it follows that, for every block and for every $r=1,\ldots,h$, the $r$th column contains the same number of nonzero entries as the $r$th row. For every vertex, its degree in $\Gamma$ is the sum of the numbers of nonzero entries in the corresponding rows of the blocks, whereas its degree in $\Gamma^\tau$ is the sum of the numbers of nonzero entries in the corresponding columns of the same blocks. Therefore, the degree of each vertex is the same in $\Gamma$ and in $\Gamma^\tau$. The conclusion then follows from
Proposition~\ref{prop:0}.
\end{proof}
A special case in which the assumptions of Proposition~\ref{prop:norm}
are satisfied arises when the blocks are taken from a family of
commuting normal matrices with a constant number of nonzero entries in
each row (and hence in each column). This is essentially the setting of
\cite[Theorem~3.3]{PT}, where in addition each block is required to have
equal row and column sums, an assumption needed there only to deal with
the all-ones matrix $J$; the argument in that proof implicitly yields a
simultaneous conjugation of the adjacency and degree matrices, which
Proposition~\ref{prop:norm} makes explicit in terms of degree similarity.

 In the strategy outlined in Remark~\ref{rem:pt}, it is not actually necessary for the blocks to be normal. In what follows we present  some of the possible cases.
\begin{proposition}\label{prop:singdeg}
Let $\Gamma$ be a signed graph whose adjacency matrix is viewed with the block structure of Eq.~\eqref{eq:blocchi}. Assume that every block belongs to the set
$\{\pm I, O, \pm B, \pm B^t\}$,
where $B$ is a square matrix of order $h$ for which there exists a unitary matrix $U$ satisfying
$U^\ast B U=B^t$.
\\ If every row and every column of $B$ have the same number of nonzero entries, then $\Gamma$ and $\Gamma^\tau$ are degree similar.
 If every row and every column  have the same sum, then $\Gamma$ and $\Gamma^\tau$ are net-degree similar.
\end{proposition}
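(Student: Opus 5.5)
The plan is to apply Remark~\ref{rem:pt}, with the conjugating matrix $M=\diag(V,\ldots,V)$ for a suitable real orthogonal $V$. We must check three things: that $V$ simultaneously conjugates every block to its transpose, that vertices in the same block have equal (net-)degree, and that the (net-)degrees are unchanged under partial transposition. In fact, as in the proof of Proposition~\ref{prop:norm}, we only need $A_{\Gamma^\tau}=M^tA_\Gamma M$ and $D^{(\pm)}_\Gamma=D^{(\pm)}_{\Gamma^\tau}$. Then Proposition~\ref{prop:0} applies with every vertex forming its own "degree class" relative to $M$; more precisely, $M$ commutes with the diagonal matrix, since that matrix will turn out to be constant on each block of size $h$.

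\textbf{Step 1: the conjugating matrix.} From $U^*BU=B^t$ with $U$ unitary, we get $U^*B^tU=(U^tB U^{*t})^t$. Real matrices are, however, better handled through Specht's criterion: $B$ and $B^t$ are unitarily similar, and hence the pair $(B,B^t)$ is unitarily similar to the pair $(B^t,B)$. Indeed, taking the transpose of $U^*BU=B^t$ gives $U^tB^t\bar U=B$, so $\bar U^*B^t\bar U=B$. Since $B$ is real, conjugating the original relation gives $U^t B\bar U=B^t$ as well. This alone does not yield a single matrix swapping $B$ and $B^t$, so I would instead verify the Specht--Wiegmann trace condition for the families $\{I,B,B^t\}$ and $\{I,B^t,B\}$. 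A word $w(B,B^t)$ has trace equal to $\operatorname{tr}(w(B,B^t)^t)=\operatorname{tr}(\tilde w(B^t,B))$, where $\tilde w$ is the reversed word. Because $B$ is unitarily similar to $B^t$, the same unitary maps $B^*=B^t$ to $(B^t)^*=B$, so $U^*w(B,B^t)U=w(B^t,B)$ directly. So $U$ itself simultaneously sends $B\mapsto B^t$ and $B^t\mapsto B$, and trivially fixes $\pm I$ and $O$. By \cite[Theorem~5]{V} we replace $U$ with a real orthogonal $V$ doing the same for the whole family. Then $V^tC_{i,j}V=C_{i,j}^t$ for all blocks, and $A_{\Gamma^\tau}=M^tA_\Gamma M$. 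This step is the main point; I expect the only subtlety to be noticing that unitary similarity of $B$ and $B^t$ automatically handles $B^t$ via adjoints.

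\textbf{Step 2: (net-)degrees.} The degree (respectively net-degree) of the $r$th vertex of the $i$th block row is $\sum_j$ (number of nonzero entries, respectively sum) of row $r$ of $C_{i,j}$. Each block is $\pm I$, $O$, $\pm B$ or $\pm B^t$. Under the hypothesis that every row and every column of $B$ has the same number $k$ of nonzero entries, each row and column of $C_{i,j}$ has a count in $\{1,0,k\}$, depending only on the block type. Hence all vertices in a block row have the same degree, and row counts equal column counts, so degrees in $\Gamma^\tau$ (which are column counts) coincide with those in $\Gamma$. The same argument with common row/column sum $s$ applies to net-degrees: the sums are $\pm1$, $0$ or $\pm s$, again depending only on the block type. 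Therefore $D^{(\pm)}_\Gamma=D^{(\pm)}_{\Gamma^\tau}$, and this matrix is scalar on each diagonal block of $M$, so $M$ commutes with it.

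\textbf{Step 3: conclusion.} Apply Proposition~\ref{prop:0} (equivalently Remark~\ref{rem:pt}) with $M=\diag(V,\ldots,V)$ to conclude that $\Gamma$ and $\Gamma^\tau$ are degree similar, respectively net-degree similar.
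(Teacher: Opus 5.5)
Your proposal is correct and follows essentially the paper's argument. You take a real orthogonal $V$ from \cite[Theorem~5]{V} that swaps $B$ and $B^t$, set $Q=\diag(V,\ldots,V)$, use the row/column counts (or sums) of each block type to get $D^{(\pm)}_\Gamma=D^{(\pm)}_{\Gamma^\tau}$ constant on blocks, and then apply Proposition~\ref{prop:0}. The only cosmetic difference is that you obtain $U^*B^tU=B$ by taking adjoints before passing to a real orthogonal matrix, whereas the paper first gets $V^tBV=B^t$ and then transposes. One correction: your claim that the transposed and conjugated relations ``do not yield a single matrix swapping $B$ and $B^t$'' is false, since they show $\bar U$ does exactly that. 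The Specht--Wiegmann trace detour is therefore unnecessary and can be deleted.
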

\begin{proof}
By \cite[Theorem~5]{V}, there exists a real orthogonal matrix $V$ such that
$V^tBV=B^t$ and therefore $V^tB^tV=B$. This easily implies that $V^{t}C_{i,j}V={C_{i,j}}^t$
for all $i,j=1,\ldots,q$ and $A_{\Gamma^\tau}=Q^{t}A_{\Gamma}Q$, where  $Q=\diag(V,\ldots,V)$.
If the number of nonzero entries in each row and column of $B$ and $B^t$ is constant, then the degree of any vertex within the same part is the same in $\Gamma$ and is preserved under the partial transpose $\Gamma^\tau$. The same conclusion holds for the net-degree in the case where the sum of the entries in each row or column of $B$ and $B^t$ is constant.
The result then follows from Proposition~\ref{prop:0}.
\end{proof}

\begin{example}
The signed graph and its partial transpose described in \cite[Example~3.5]{PT} have adjacency matrices whose blocks are normal, commuting, and have constant row sums; hence, by Proposition~\ref{prop:singdeg}, they are net-degree similar. Moreover, it is shown in \cite[Example~5.3]{PT} that these graphs are not switching isomorphic.
\\ The unsigned graph $P_3(T_{11})$ and its non-isomorphic partial transpose $P_3(T_{11})^\tau$ from \cite[Proposition~5.4]{PT} are degree similar (and hence net-degree similar) by the same argument.
\end{example}

 \begin{proposition}\label{prop:singnetmeno}
Let $\Gamma$ be a signed graph whose adjacency matrix is viewed with the block structure of Eq.~\eqref{eq:blocchi}. Assume that every block belongs to the set
$\{\pm I, O, \pm B, \pm B^t\}$,
where $B$ is a square matrix of order $h$ for which there exists a unitary matrix $U$ satisfying
$U^\ast B U=B^t$.
If, in every row of the block structure in Eq.~\eqref{eq:blocchi}, the number of occurrences of $B$ (resp.\ $B^t$) equals that of $-B$ (resp.\ $-B^t$), then $\Gamma$ and $\Gamma^\tau$ are net-degree similar.
\end{proposition}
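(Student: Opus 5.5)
Following the proof of Proposition~\ref{prop:singdeg}, we first use \cite[Theorem~5]{V} to replace the unitary $U$ by a real orthogonal matrix $V$ of order $h$ with $V^tBV=B^t$. Transposing gives $V^tB^tV=B$. Trivially $V^t(\pm I)V=\pm I=(\pm I)^t$ and $V^tOV=O$. Hence $V^tC_{i,j}V=C_{i,j}^t$ for every block, and with $Q=\diag(V,\ldots,V)$ we get $A_{\Gamma^\tau}=Q^tA_\Gamma Q$. By Proposition~\ref{prop:0}, with the blocks of $Q$ indexed by the $q$ row-blocks of vertices, it then suffices to show two things: all vertices in the same block have the same net-degree in $\Gamma$, and $D^\pm_\Gamma=D^\pm_{\Gamma^\tau}$.

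For the net-degree we use $\mathbf d^\pm_\Gamma=A_\Gamma\mathbf 1$. Write $\mathbf 1=(\mathbf 1_h,\ldots,\mathbf 1_h)$. The $i$th block of $A_\Gamma\mathbf 1$ is $\sum_j C_{i,j}\mathbf 1_h$. The hypothesis says that in row $i$ the terms $B\mathbf 1_h$ and $-B\mathbf 1_h$ occur equally often, and so do $B^t\mathbf 1_h$ and $-B^t\mathbf 1_h$. These contributions cancel, and the sum reduces to $(a_i-b_i)\mathbf 1_h$, where $a_i$ and $b_i$ count the occurrences of $I$ and $-I$ in row $i$. So the net-degree is constant on each block, equal to $a_i-b_i$.

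For $\Gamma^\tau$ the $i$th block of $A_{\Gamma^\tau}\mathbf 1$ is $\sum_j C_{i,j}^t\mathbf 1_h$. Transposition swaps $B$ with $B^t$, maps $-B$ to $-B^t$ and $-B^t$ to $-B$, and fixes $\pm I$ and $O$. The hypothesis is therefore preserved in each row of $A_\Gamma^\tau$, and the same computation gives $(a_i-b_i)\mathbf 1_h$. Thus $D^\pm_\Gamma=D^\pm_{\Gamma^\tau}$, and Proposition~\ref{prop:0} gives net-degree similarity.

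The main point to check is the cancellation. It needs no condition on the row sums of $B$, only the pairing $B$ with $-B$ and $B^t$ with $-B^t$. It also relies on symmetry of $A_\Gamma$ only implicitly, since $C_{j,i}=C_{i,j}^t$, so no column condition is needed beyond what the row hypothesis gives after transposition. One should also confirm that $\Gamma^\tau$ is a genuine signed graph, i.e.\ $A_\Gamma^\tau$ is symmetric with zero diagonal. This holds because the $(j,i)$ block of $A_\Gamma^\tau$ is $C_{j,i}^t=C_{i,j}$, whose transpose is $C_{i,j}^t$, the $(i,j)$ block; and a diagonal block $C_{i,i}$ is symmetric with zero diagonal, hence so is $C_{i,i}^t$.
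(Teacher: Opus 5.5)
Your proof is correct and follows the paper's argument: you obtain the orthogonal $V$ with $V^tBV=B^t$ (hence $V^tB^tV=B$) from \cite[Theorem~5]{V} and conjugate by $Q=\diag(V,\ldots,V)$. You then observe that in each block row the contributions of $\pm B$ and $\pm B^t$ to $A\mathbf 1$ cancel, so every vertex of the $i$-th block has net-degree equal to the number of $I$ blocks minus the number of $-I$ blocks in both $\Gamma$ and $\Gamma^\tau$, and you conclude with Proposition~\ref{prop:0}. Your extra checks, that transposition preserves the row hypothesis and that $A_\Gamma^\tau$ is symmetric with zero diagonal, only make explicit what the paper leaves implicit.
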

\begin{proof}
As in the proof of Proposition~\ref{prop:singdeg},
$A_{\Gamma^\tau}=Q^tA_\Gamma Q$ with $Q=\diag(V,\ldots,V)$. In every block
row the contributions of $\pm B$ and of $\pm B^t$ to the net-degrees
cancel out, so the net-degree of each vertex in the $i$-th block of
vertices equals the number of blocks $I$ minus the number of blocks $-I$
in the $i$-th block row, both in $\Gamma$ and in $\Gamma^\tau$. The claim
follows from Proposition~\ref{prop:0}.
\end{proof}

\begin{example}\label{exa:5}
It can be checked that the matrix
$$
B =
\begin{pmatrix}
0 & 0 & 0 \\
0 & 0 & 0 \\
1 & -1 & 1
\end{pmatrix}
$$
is not normal and the sum of the entries along each of its rows is not constant; however, it is conjugated to its transpose by a unitary matrix (see \cite[Example 4.14]{PT}). The adjacency matrices of the signed graph $\Gamma$ depicted on the left
of Fig.~\ref{fig:5} and of its partial transpose $\Gamma^\tau$, depicted
on the right, are
$$
A_\Gamma=\begin{pmatrix}
O      & B      & -B     & O      & O \\
B^t    & O      & -B^t   & O      & I \\
-B^t   & -B     & O      & B      & B^t \\
O      & O      & B^t    & O      & -B^t \\
O      & I      & B      & -B     & O
\end{pmatrix},
\qquad
A_{\Gamma^\tau}=\begin{pmatrix}
O      & B^t    & -B^t   & O      & O \\
B      & O      & -B     & O      & I \\
-B     & -B^t   & O      & B^t    & B \\
O      & O      & B      & O      & -B \\
O      & I      & B^t    & -B^t   & O
\end{pmatrix}.
$$
The hypotheses of Proposition~\ref{prop:singnetmeno} are satisfied, so
the signed graphs $\Gamma$ and $\Gamma^\tau$ are net-degree similar.
On the other hand, it is possible to see that $\Gamma$ and $\Gamma^\tau$ are
not switching isomorphic: a possible way is to observe that any isomorphism
must map the triangles on vertices $9,5,3$ and $9,4,3$ of $\Gamma$, which are
both balanced (and are the only two triangles whose vertices are adjacent to
a pendant vertex), to the triangles on vertices $9,12,13$ and $9,12,14$ of
$\Gamma^\tau$, which are both unbalanced.
 \end{example}

 \begin{figure}
\begin{centering}
\begin{tikzpicture}[scale=0.75,
  vertex/.style={circle, fill=black, minimum size=5pt, inner sep=0pt},
  pos edge/.style={thin},
  neg edge/.style={thin, dashed},
  lbl/.style={draw=none, fill=none, font=\small}
]

\node[vertex,label={[font=\scriptsize]left:$1$}] (1) at (1.5, -1) {};
\node[vertex,label={[font=\scriptsize]right:$2$}] (2) at (6.5, -1) {};
\node[vertex,label={[font=\scriptsize]left:$3$}] (3) at (1, 0) {};

\node[vertex,label={[font=\scriptsize]below right:$9$}] (9) at (4, 0) {};
\node[vertex,label={[font=\scriptsize]right:$15$}] (15) at (7, 0) {};

\node[vertex,label={[font=\scriptsize]below:$7$}] (7) at (3.3, -1.7) {};
\node[vertex,label={[font=\scriptsize]below right:$8$}] (8) at (4.7, -1.7) {};

\node[vertex,label={[font=\scriptsize]left:$5$}] (5) at (3, 2.7) {};
\node[vertex,label={[font=\scriptsize]above:$11$}] (11) at (5.5, 3) {};

\node[vertex,label={[font=\scriptsize] left:$4$}] (4) at (2, 2.7) {};
\node[vertex,label={[font=\scriptsize]below:$6$}] (6) at (4, -0.7) {};

\node[vertex,label={[font=\scriptsize]above:$10$}] (10) at (4.3, 2.7) {};
\node[vertex,label={[font=\scriptsize]above:$12$}] (12) at (6.7, 2.7) {};

\node[vertex,label={[font=\scriptsize]above:$13$}] (13) at (2, 3.2) {};
\node[vertex,label={[font=\scriptsize]above:$14$}] (14) at (3, 3.2) {};

\node[lbl, right] at (7.8,0) {$$};

\draw[pos edge] (3) -- (4);
\draw[neg edge] (3) -- (5);
\draw[neg edge] (3) -- (9);
\draw[neg edge] (3) -- (7);
\draw[pos edge] (13) -- (4);
\draw[neg edge] (9) -- (4);
\draw[pos edge] (9) -- (5);
\draw[pos edge] (14) -- (5);
\draw[pos edge] (3) -- (6);
\draw[pos edge] (15) -- (6);
\draw[neg edge] (6) -- (9);
\draw[pos edge] (9) -- (10);
\draw[pos edge] (9) -- (12);
\draw[pos edge] (11) -- (15);
\draw[pos edge] (9) -- (15);
\draw[neg edge] (10) -- (15);
\draw[neg edge] (12) -- (15);
\draw[neg edge] (11) -- (9);
\draw[neg edge] (8) -- (15);
\draw[pos edge] (8) -- (3);
\draw[pos edge] (7) -- (15);

\end{tikzpicture}
\begin{tikzpicture}[scale=0.75,
  vertex/.style={circle, fill=black, minimum size=5pt, inner sep=0pt},
  pos edge/.style={thin},
  neg edge/.style={thin, dashed},
  lbl/.style={draw=none, fill=none, font=\small}
]
\node[vertex,label={[font=\scriptsize]left:$10$}] (1) at (1.5, -1) {};
\node[vertex,label={[font=\scriptsize]right:$11$}] (2) at (6.5, -1) {};
\node[vertex,label={[font=\scriptsize]left:$12$}] (3) at (1, 0) {};

\node[vertex,label={[font=\scriptsize]below right:$9$}] (9) at (4, 0) {};
\node[vertex,label={[font=\scriptsize]right:$6$}] (15) at (7, 0) {};

\node[vertex,label={[font=\scriptsize]below:$7$}] (7) at (3.3, -1.7) {};
\node[vertex,label={[font=\scriptsize]below right:$8$}] (8) at (4.7, -1.7) {};

\node[vertex,label={[font=\scriptsize]left:$14$}] (5) at (3, 2.7) {};
\node[vertex,label={[font=\scriptsize]above:$2$}] (11) at (5.5, 3) {};

\node[vertex,label={[font=\scriptsize] left:$13$}] (4) at (2, 2.7) {};
\node[vertex,label={[font=\scriptsize]below:$15$}] (6) at (4, -0.7) {};

\node[vertex,label={[font=\scriptsize]above:$1$}] (10) at (4.3, 2.7) {};
\node[vertex,label={[font=\scriptsize]above:$3$}] (12) at (6.7, 2.7) {};

\node[vertex,label={[font=\scriptsize]above:$4$}] (13) at (2, 3.2) {};
\node[vertex,label={[font=\scriptsize]above:$5$}] (14) at (3, 3.2) {};


\draw[neg edge] (3) -- (4);
\draw[pos edge] (3) -- (5);
\draw[pos edge] (3) -- (9);
\draw[pos edge] (3) -- (7);
\draw[pos edge] (13) -- (4);
\draw[pos edge] (9) -- (4);
\draw[neg edge] (9) -- (5);
\draw[pos edge] (14) -- (5);
\draw[neg edge] (3) -- (6);
\draw[pos edge] (15) -- (6);
\draw[pos edge] (6) -- (9);
\draw[neg edge] (9) -- (10);
\draw[neg edge] (9) -- (12);
\draw[neg edge] (11) -- (15);
\draw[neg edge] (9) -- (15);
\draw[pos edge] (10) -- (15);
\draw[pos edge] (12) -- (15);
\draw[pos edge] (11) -- (9);
\draw[pos edge] (8) -- (15);
\draw[neg edge] (8) -- (3);
\draw[neg edge] (7) -- (15);

\end{tikzpicture}

\end{centering}
\caption{ The  net-degree similar  graphs $\Gamma$ and $\Gamma^\tau$ of Example \ref{exa:5}.}\label{fig:5}
\end{figure}

 \section{Block extensions}\label{sec:6}
 In this final section we present a strategy for generating new pairs of
(net-)degree similar graphs from existing ones, exploiting their block
structure.

Let $\Gamma_1$ and $\Gamma_2$ be two (net-)degree similar signed graphs
of order $h$, with vertices ordered in nondecreasing order of
(net-)degree, so that $D^{(\pm)}_{\Gamma_1}=D^{(\pm)}_{\Gamma_2}$ and
$M=\diag(M_1,\ldots,M_k)$ is an invertible block diagonal matrix with
$M^{-1}A_{\Gamma_1}M=A_{\Gamma_2}$, as in Proposition~\ref{prop:-1}.
Let $\overline{\Gamma}_1$ be a signed graph of order $qh$ whose adjacency
matrix is viewed with the block structure of Eq.~\eqref{eq:blocchi},
and put $\overline{M}:=\diag(M,\ldots,M)=I_q\otimes M$. The idea is the
following: if every block $C_{i,j}$ is a polynomial
$p_{i,j}(A_{\Gamma_1},D^{(\pm)}_{\Gamma_1})$, then
$\overline{M}^{-1}A_{\overline{\Gamma}_1}\overline{M}$ is the block matrix with
blocks $p_{i,j}(A_{\Gamma_2},D^{(\pm)}_{\Gamma_2})$; whenever this is the
adjacency matrix of a signed graph $\overline{\Gamma}_2$ and the
(net-)degree is constant on each block of vertices and unchanged from
$\overline{\Gamma}_1$ to $\overline{\Gamma}_2$, Proposition~\ref{prop:0}
yields that $\overline{\Gamma}_1$ and $\overline{\Gamma}_2$ are
(net-)degree similar. We describe in Proposition~\ref{prop:b1} a first situation in which this happens.

Given a signed graph $\Gamma$ of order $h$, consider an ordering of its vertices in
nondecreasing order of (net-)degree, with distinct (net-)degrees
$d^{(\pm)}_1<d^{(\pm)}_2<\cdots<d^{(\pm)}_k$. Therefore
$D^{(\pm)}_{\Gamma}=\diag(d^{(\pm)}_1 I_{n_1},\ldots,d^{(\pm)}_k I_{n_k})$,
where $n_i$ is the number of vertices whose (net-)degree is $d_i^{(\pm)}$, for $i=1,\ldots,k$.
We denote by $\bold e_i$ the vector of length $h$ whose entries are equal to
$1$ at the coordinates corresponding to vertices of (net-)degree $d^{(\pm)}_i$ and
equal to $0$ elsewhere.
We consider the $k$ matrices
\begin{equation}\label{eq:E}
E^{(\pm)}_\Gamma(i)=\diag(\bold e_i)=\diag(O,\ldots,O,I_{n_i},O,\ldots,O),\qquad i=1,\ldots,k,
\end{equation}
such that $D^{(\pm)}_{\Gamma}=\sum_{i=1}^k  d_i^{(\pm)} E^{(\pm)}_\Gamma(i)$.  We denote by $\mathcal{E}^{(\pm)}_{\Gamma}$ the set of all (diagonal)
matrices obtained by taking linear combinations of
$E^{(\pm)}_{\Gamma}(1),\ldots,E^{(\pm)}_{\Gamma}(k)$ with coefficients in
$\{0,\pm1\}$. Observe that every matrix $E\in\mathcal{E}^{(\pm)}_{\Gamma}$ can be expressed
as a polynomial in $D^{(\pm)}_{\Gamma}$. To see that, it is enough to choose a one-variable
polynomial $p$ that maps each $d_i^{(\pm)}$ to the coefficient in
$\{0,\pm 1\}$ of $E^{(\pm)}_{\Gamma}(i)$ in the linear combination defining
$E$.
\begin{proposition}\label{prop:b1}
Let $\Gamma_1$ and $\Gamma_2$ be two (net-)degree similar signed graphs, whose
vertices are ordered in nondecreasing order of (net-)degree. Let $\overline{\Gamma}_1$ be a signed graph whose adjacency matrix $A_{\overline{\Gamma}_1}$ has the
block structure as in Eq.~\eqref{eq:blocchi}, with each block belonging
to
$$\{\pm A_{\Gamma_1},O\}\cup\mathcal{E}^{(\pm)}_{\Gamma_1}.$$
Let $\overline{\Gamma}_2$ be the signed graph obtained by replacing  every block of the form $\pm A_{\Gamma_1}$ with
$\pm A_{\Gamma_2}$ in
$A_{\overline{\Gamma}_1}$. Then $\overline{\Gamma}_1$ and $\overline{\Gamma}_2$ are
(net-)degree similar.
\end{proposition}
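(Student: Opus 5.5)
By Proposition~\ref{prop:-1}, there is an invertible block diagonal matrix $M=\diag(M_1,\ldots,M_k)$, with blocks indexed by the (net-)degree classes, such that $M^{-1}A_{\Gamma_1}M=A_{\Gamma_2}$ and $D^{(\pm)}_{\Gamma_1}=D^{(\pm)}_{\Gamma_2}$. In particular the two graphs have the same class sizes $n_1,\ldots,n_k$, so $\mathcal{E}^{(\pm)}_{\Gamma_1}=\mathcal{E}^{(\pm)}_{\Gamma_2}$. Since $M$ commutes with every $E^{(\pm)}_{\Gamma_1}(i)$, it also commutes with every $E\in\mathcal{E}^{(\pm)}_{\Gamma_1}$. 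I will set $\overline{M}=I_q\otimes M=\diag(M,\ldots,M)$. Conjugating blockwise, $\overline{M}^{-1}A_{\overline{\Gamma}_1}\overline{M}$ has $(i,j)$ block $M^{-1}C_{i,j}M$. This equals $\pm A_{\Gamma_2}$ when $C_{i,j}=\pm A_{\Gamma_1}$, equals $O$ when $C_{i,j}=O$, and equals $C_{i,j}$ when $C_{i,j}\in\mathcal{E}^{(\pm)}_{\Gamma_1}$. Hence $\overline{M}^{-1}A_{\overline{\Gamma}_1}\overline{M}=A_{\overline{\Gamma}_2}$. This matrix is automatically symmetric with zero diagonal and entries in $\{0,\pm1\}$, being the adjacency matrix of the signed graph $\overline{\Gamma}_2$ described in the statement.

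It remains to apply Proposition~\ref{prop:0} with the blocks of $\overline{M}$, namely the $qk$ blocks $M_l$ in the $q$ copies. Two things must be checked: the (net-)degree is constant on the vertices of each such block, and it is the same in $\overline{\Gamma}_1$ and $\overline{\Gamma}_2$.

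Fix a vertex $v$ lying in the $i$-th block row of Eq.~\eqref{eq:blocchi} and corresponding to a vertex $u$ of (net-)degree class $l$ in $\Gamma_1$. Its (net-)degree in $\overline{\Gamma}_1$ is a sum over $j$ of contributions from the $u$-th row of $C_{i,j}$:
\begin{itemize}
\item a block $\pm A_{\Gamma_1}$ contributes $d_{\Gamma_1}(u)=d_l$ in the degree case, and $\pm d^\pm_{\Gamma_1}(u)=\pm d^\pm_l$ in the net-degree case;
\item a block $O$ contributes $0$;
\item a block $E=\sum_m c_m E(m)$ contributes $|c_l|$ in the degree case and $c_l$ in the net-degree case.
\end{itemize}
All of these depend only on $l$ and on the block type. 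In $\overline{\Gamma}_2$ the same block types occur, the $\mathcal{E}$-blocks are unchanged, and $u$ has the same (net-)degree $d^{(\pm)}_l$ in $\Gamma_2$ because $D^{(\pm)}_{\Gamma_1}=D^{(\pm)}_{\Gamma_2}$. So the two (net-)degrees agree and are constant on each block of $\overline{M}$, and Proposition~\ref{prop:0} concludes the proof.

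The main point needing care is this degree bookkeeping. In the degree case one must use that a block $-A_{\Gamma_1}$ still contributes $+d_l$ to the degree. One must also make sure that the class partition, and hence $\mathcal{E}^{(\pm)}$, is literally the same for $\Gamma_1$ and $\Gamma_2$ under the common ordering, which is exactly what Proposition~\ref{prop:-1} guarantees.
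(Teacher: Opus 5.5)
Your proof is correct and follows the paper's argument. You take $M=\diag(M_1,\ldots,M_k)$ from Proposition~\ref{prop:-1}, note that it fixes every $E\in\mathcal{E}^{(\pm)}_{\Gamma_1}$, conjugate by $\overline{M}=I_q\otimes M$, and conclude with Proposition~\ref{prop:0}; your row-by-row (net-)degree bookkeeping just spells out the step that the paper states as an observation.
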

\begin{proof}
Since $\Gamma_1$ and $\Gamma_2$ are (net-)degree similar, Proposition~\ref{prop:-1}
provides an invertible matrix $M$ with the block structure described therein
such that
$$
M^{-1}A_{\Gamma_1}M=A_{\Gamma_2},\qquad
M^{-1}D^{(\pm)}_{\Gamma_1}M=D^{(\pm)}_{\Gamma_2}
=D^{(\pm)}_{\Gamma_1}.
$$
Using the block structure of $M$, it is immediate that every matrix
$E\in\mathcal{E}^{(\pm)}_{\Gamma_1}=\mathcal{E}^{(\pm)}_{\Gamma_2}$ is fixed by
the conjugation with $M$, namely $M^{-1}EM=E$. Hence, the matrix
$\overline{M}$ conjugates $A_{\overline{\Gamma}_1}$ into
$A_{\overline{\Gamma}_2}$.
The proof is completed by applying Proposition~\ref{prop:0} and observing
that $\overline{\Gamma}_1$ and $\overline{\Gamma}_2$ have the same
(net-)degrees, which are constant on the blocks of
$\overline{M}$.
\end{proof}
Recall that a rooted signed graph is a signed graph where a special vertex, called the root, has been fixed.
\begin{example}\label{exa:rooted}
Let $\Gamma$ be a signed graph and let $\mathcal K=(K_v)_{v\in V_\Gamma}$
be a family of rooted signed graphs. The \emph{rooted product}
$\Gamma(\mathcal K)$, introduced in \cite{GMr} for unsigned graphs, is
the signed graph obtained by identifying the root of $K_v$ with the
vertex $v$, for every $v\in V_\Gamma$. Let $\Gamma_1$ and $\Gamma_2$ be
(net-)degree similar, with vertices ordered in nondecreasing order of
(net-)degree, and let $K^{(1)},\ldots,K^{(k)}$ be rooted signed graphs
with the same vertex set $\{1,\ldots,q\}$ and root $1$; let
$\Gamma_j(\mathcal K)$ be the rooted product in which a copy of
$K^{(i)}$ is attached at every vertex of the $i$-th (net-)degree class
of $\Gamma_j$, with $j=1,2$ and $i=1,\ldots, k$. With a suitable ordering of the  vertices of $\Gamma_j(\mathcal K)$
(by taking first $V_{\Gamma_j}$) one has
$$
A_{\Gamma_j(\mathcal K)}=E_{11}\otimes A_{\Gamma_j}
+\sum_{i=1}^{k}A_{K^{(i)}}\otimes E^{(\pm)}_{\Gamma_1}(i),
$$
where $E_{11}$ denotes the $q\times q$ matrix whose only nonzero entry
is a $1$ in position $(1,1)$. Hence $A_{\Gamma_j(\mathcal K)}$ has the
block structure of Eq.~\eqref{eq:blocchi}, with blocks of type $A_{\Gamma_j}$ and sums of some
$E^{(\pm)}_{\Gamma_1}(i)$'s, which belong to $\mathcal E^{(\pm)}_{\Gamma_1}$,
so that $\Gamma_1(\mathcal K)$ and $\Gamma_2(\mathcal K)$ are (net-)degree
similar by Proposition~\ref{prop:b1}. The assumption that the $K^{(i)}$
have the same order is not restrictive for degree similarity, since one may
add isolated vertices to them, which only adds isolated vertices to
$\Gamma_j(\mathcal K)$ (Remark~\ref{rem:delete}). In the unsigned case,
this construction contains the rooted product at vertices whose degree is different
from that of all other vertices \cite[Lemma~18]{GS} and the attachment
of $s_i$ pendant vertices to every vertex of degree $d_i$
\cite[Lemma~20]{GS}, which corresponds to choose as $K^{(i)}$  a star graph on $s_i+1$ vertices rooted at
the centre.
\\ As a special case, take the degree similar signed graphs $C_{2n}^+$
and $C_n^+\cup C_n^-$ of Example~\ref{ex:uni} and attach a copy of the
same rooted tree $T$ at every vertex. The resulting signed graphs are
degree similar, but the first is connected and unicyclic while the
second is not. This yields infinitely many connected unicyclic signed
graphs which are not determined by their degree similarity class.
Similarly, attaching a copy of the same rooted signed tree $T$ at every
vertex of the net-degree similar unicyclic signed graphs $\Gamma$ and
$\Gamma^\pi$ of Example~\ref{exa:uni}
(or, more generally, three rooted
signed trees of the same order, one for each of the three net-degree
classes)
yields infinitely many pairs of net-degree similar unicyclic signed
 graphs with non-isomorphic underlying graphs.
\end{example}

Another case of interest arises, in the specific context of net-degree similarity,
when the signed graph  is such that the null space of its
net-Laplacian  has dimension $1$. Some families of signed graphs with this
property are studied in \cite{null}; for instance, every signed tree satisfies this
condition, as does every connected unsigned graph (or, equivalently, every
connected all-positive or all-negative signed graph).
\\We introduce some additional possible kinds of blocks for this setting. In analogy with Eq.~\eqref{eq:E}, we define the following $k$ matrices:
\begin{equation*}\label{eq:J}
J^{\pm}_{\Gamma}(i)=\mathbf{e}_i\mathbf{e}_i^t
=\diag(O,\ldots,O,J_{n_i},O,\ldots,O),\qquad i=1,\ldots,k,
\end{equation*}
where $J_{n_i}$ denotes the $n_i\times n_i$ all-ones matrix.
We denote by
$\mathcal{J}^{\pm}_{\Gamma}$ the set of all linear combinations of $J^{\pm}_{\Gamma}(1),\ldots,J^{\pm}_{\Gamma}(k)$  with
coefficients in $\{0,\pm1\}$.

\begin{theorem}\label{prop:b2}
Let $\Gamma_1$ and $\Gamma_2$ be two net-degree similar signed graphs whose
vertices are ordered in nondecreasing order of net-degree, and assume that
$\dim(\ker L^\pm_{\Gamma_1})=1$. Let $\overline{\Gamma}_1$ be a signed graph
whose adjacency matrix  $A_{\overline{\Gamma}_1}$ has the block structure described in
Eq.~\eqref{eq:blocchi}, with each block belonging to
$$\{\pm A_{\Gamma_1},\pm J,O\}\cup
\mathcal{E}^{\pm}_{\Gamma_1}\cup\mathcal{J}^{\pm}_{\Gamma_1}.$$
Let $\overline{\Gamma}_2$ be the signed graph obtained from
$\overline{\Gamma}_1$ by replacing every block of the form
$\pm A_{\Gamma_1}$  with
$\pm A_{\Gamma_2}$ in $A_{\overline{\Gamma}_1}$. Then $\overline{\Gamma}_1$ and
$\overline{\Gamma}_2$ are net-degree similar.
\end{theorem}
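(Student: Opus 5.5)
The plan is to follow the proof of Proposition~\ref{prop:b1} with the same conjugating matrix $\overline{M}=I_q\otimes M$. The new ingredient is that $M$ must also fix the blocks $\pm J$ and the elements of $\mathcal{J}^{\pm}_{\Gamma_1}$.

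By Proposition~\ref{prop:-1} we have $D^\pm_{\Gamma_1}=D^\pm_{\Gamma_2}=:D^\pm$, and there is an invertible block diagonal $M=\diag(M_1,\ldots,M_k)$, adapted to the net-degree classes, with $M^{-1}A_{\Gamma_1}M=A_{\Gamma_2}$ and $M^{-1}D^\pm M=D^\pm$. By Lemma~\ref{lem:orto} we may also take $M$ orthogonal. To see this, apply the polar decomposition of that lemma to a block diagonal $M$. The factor $(M^tM)^{1/2}$ is again block diagonal, so $Q=M(M^tM)^{-1/2}$ is orthogonal, block diagonal, and still conjugates $A_{\Gamma_1}$ to $A_{\Gamma_2}$. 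Then $M^{-1}L^\pm_{\Gamma_1}M=L^\pm_{\Gamma_2}$.

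The key step is to show that $M$ fixes each vector $\mathbf e_i$, i.e.\ $M\mathbf e_i=\mathbf e_i$ (and then $M^t\mathbf e_i=\mathbf e_i$ by orthogonality). Since the net-degree vector is $A\mathbf 1$, we have $L^\pm_{\Gamma_j}\mathbf 1=D^\pm\mathbf 1-A_{\Gamma_j}\mathbf 1=\mathbf 0$ for both $j=1,2$. Hence $\mathbf 1\in\ker L^\pm_{\Gamma_1}$, and since this kernel is $1$-dimensional it equals $\operatorname{span}(\mathbf 1)$. From $L^\pm_{\Gamma_1}M=ML^\pm_{\Gamma_2}$ we get $L^\pm_{\Gamma_1}(M\mathbf 1)=M L^\pm_{\Gamma_2}\mathbf 1=\mathbf 0$, so $M\mathbf 1=c\mathbf 1$ for some scalar $c$, with $c^2=1$ by orthogonality.

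This is the step I expect to be the main obstacle, because $c$ could be $-1$. I would handle it by replacing $M$ with $cM$, which still conjugates $A_{\Gamma_1}$ to $A_{\Gamma_2}$ and commutes with $D^\pm$; alternatively one can avoid orthogonality and simply rescale $M$ by $1/c$. So $M\mathbf 1=\mathbf 1$. Because $M$ is block diagonal along the classes, $M\mathbf 1=\mathbf 1$ means $M_i\mathbf 1_{n_i}=\mathbf 1_{n_i}$ for each $i$, that is, $M\mathbf e_i=\mathbf e_i$ for each $i$.

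Without orthogonality we still need $\mathbf e_i^tM=\mathbf e_i^t$, so I would also prove $M^{-t}\mathbf 1\propto\mathbf 1$. Transposing $M^{-1}L^\pm_{\Gamma_1}M=L^\pm_{\Gamma_2}$ and using symmetry gives $M^tL^\pm_{\Gamma_1}M^{-t}=L^\pm_{\Gamma_2}$, so $M^{-t}\mathbf 1\in\ker L^\pm_{\Gamma_1}$. Using orthogonality is cleaner, and in either case we obtain $\mathbf 1^tM=\mathbf 1^t$ blockwise.

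It follows that
\[
M^{-1}J^\pm_{\Gamma_1}(i)M=(M^{-1}\mathbf e_i)(\mathbf e_i^tM)=\mathbf e_i\mathbf e_i^t,
\qquad
M^{-1}JM=M^{-1}\mathbf 1\mathbf 1^tM=J.
\]
Also $\mathcal E^\pm$ is fixed, as in Proposition~\ref{prop:b1}. Therefore $\overline{M}^{-1}A_{\overline{\Gamma}_1}\overline{M}=A_{\overline{\Gamma}_2}$, since each block is conjugated separately.

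Finally, since conjugate adjacency matrices give net-degree vectors $A\mathbf 1$, and $\overline M\mathbf 1=\mathbf 1$, we get
\[
\mathbf d^\pm_{\overline\Gamma_2}=\overline M^{-1}A_{\overline\Gamma_1}\overline M\mathbf 1=\overline M^{-1}\mathbf d^\pm_{\overline\Gamma_1}.
\]
To conclude that this equals $\mathbf d^\pm_{\overline\Gamma_1}$, I would check that $\mathbf d^\pm_{\overline\Gamma_1}$ is constant on each class block, so that $\overline M$ fixes it. This holds because each block of $A_{\overline{\Gamma}_1}$ applied to $\mathbf 1$ gives either $D^\pm\mathbf 1$ or a combination of the $\mathbf e_i$, and both are constant on the classes. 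Proposition~\ref{prop:0} then gives the conclusion.
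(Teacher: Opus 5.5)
Your proof is correct and follows essentially the paper's route: the same conjugating matrix $I_q\otimes M$, with the one-dimensional kernel of $L^\pm_{\Gamma_1}$ forcing $M$ to fix $\mathbf 1$ on both sides and hence, by block-diagonality, to commute with $J$ and with every element of $\mathcal{J}^{\pm}_{\Gamma_1}$. The paper instead gets $MJ=JM$ in one step by writing $J$ as a polynomial in $L^\pm_{\Gamma_1}$, so it needs neither orthogonalization nor the sign normalization $M\mapsto cM$, whereas you track the eigenvector $\mathbf 1$ directly; your explicit check that the net-degrees of $\overline{\Gamma}_1$ are constant on classes is a detail the paper leaves implicit.
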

\begin{proof}
In view of Proposition~\ref{prop:b1}, it is enough to show that the block
matrix $M$ provided by Proposition~\ref{prop:-1} commutes with the additional
blocks $\pm J$ and with every matrix in $\mathcal{J}^{\pm}_{\Gamma_1}$.
The matrix $J$ commutes with $L^\pm_{\Gamma_1}$, and therefore it can be
expressed as a polynomial in $L^\pm_{\Gamma_1}$ (it is enough to choose a
polynomial that maps $0$ to $n$ and every other eigenvalue of the
net-Laplacian to $0$).
Hence,  $J$  is also a two-variable polynomial in
$A_{\Gamma_1}$ and $D^\pm_{\Gamma_1}$.
Moreover, the same polynomial evaluated in
$A_{\Gamma_2}$ and $D^\pm_{\Gamma_2}$
again yields the matrix $J$
(see the analogous argument in the proof of
\cite[Lemma~8]{GS}). In particular, this implies that $M$ satisfies $MJ=JM$, that is, it has constant row and
column sums. Moreover, since
$M=\diag(M_1,\ldots,M_k)$, each block $M_i$ also has constant row and column
sums, or equivalently commutes with the corresponding matrix $J_{n_i}$.
Therefore, $M$ commutes with every matrix in
$\mathcal{J}^{\pm}_{\Gamma_1}$, and the proof is complete.
\end{proof}

\begin{example}
Let $\Gamma_1$ and $\Gamma_2$ be two net-degree similar signed graphs, whose
vertices are ordered in nondecreasing order of net-degree, with
$\dim(\ker L^\pm_{\Gamma_1})=1$. Consider the
signed graphs $\overline{\Gamma}_1$ and $\overline{\Gamma}_2$ with
adjacency matrices
$$
A_{\overline{\Gamma}_1}=
\begin{pmatrix}
A_{\Gamma_1} & W\\
W & O
\end{pmatrix},\qquad
A_{\overline{\Gamma}_2}=
\begin{pmatrix}
A_{\Gamma_2} & W\\
W & O
\end{pmatrix},
$$
where $W\in\mathcal{J}^{\pm}_{\Gamma_1}$. By Theorem~\ref{prop:b2}
the signed graphs $\overline{\Gamma}_1$ and $\overline{\Gamma}_2$ are
net-degree similar. In other words, for each net-degree class chosen in
the linear combination defining $W$, one adds as many new vertices as
the vertices of that class, each of them adjacent, all positively or all
negatively, to every vertex of the class; the remaining new vertices are
isolated. In the unsigned case, where $\Gamma_1$ and $\Gamma_2$ are
connected and $W$ has coefficients in $\{0,1\}$, deleting the isolated
vertices (Remark~\ref{rem:delete}) gives the construction of
\cite[Lemma~21]{GS}.
\end{example}

We now prove a result concerning net-regular signed trees, which provides
a further kind of admissible blocks. Given a signed tree $T$ of order
$h$, its underlying graph is bipartite. We denote by $\mathbf{v}_T$ a
vector whose coordinates are indexed by the vertices of $T$, taking value
$1$ on one part of the bipartition and $-1$ on the other, and we put
$K_T=\mathbf{v}_T\mathbf{v}_T^t$.
\begin{corollary}\label{coro:b1}
Let $T_1$ and $T_2$ be two cospectral $-1$-net-regular signed trees. Let $\overline{T_1}$ be a signed graph
whose adjacency matrix $A_{\overline{T_1}}$ has the block structure described in
Eq.~\eqref{eq:blocchi}, with each block belonging to
$$\{\pm A_{T_1},\pm K_{T_1},\pm J,O\}\cup
\mathcal{E}^{\pm}_{T_1}\cup\mathcal{J}^{\pm}_{T_1}.$$
Let $\overline{T}_2$ be the signed graph obtained from
$\overline{T}_1$ by replacing, in $A_{\overline{T}_1}$, every block of the
form $\pm A_{T_1}$ with $\pm A_{T_2}$ and every block of the form
$\pm K_{T_1}$ with $\pm K_{T_2}$. Then $\overline{T}_1$ and
$\overline{T}_2$ are net-degree similar.
\end{corollary}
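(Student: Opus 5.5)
The plan is to reduce to Theorem~\ref{prop:b2}, which already handles every block type except $\pm K_{T_1}$. We only need to show that the block matrix $M$ conjugating $A_{T_1}$ into $A_{T_2}$ also conjugates $K_{T_1}$ into $K_{T_2}$. Then $\overline{M}=I_q\otimes M$ conjugates $A_{\overline{T}_1}$ into $A_{\overline{T}_2}$, and Proposition~\ref{prop:0} finishes the argument as in the proof of Proposition~\ref{prop:b1}.

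First we check the hypotheses of Theorem~\ref{prop:b2}.
\begin{itemize}
\item $T_1$ and $T_2$ are cospectral and both $-1$-net-regular, so they are net-degree similar by Remark~\ref{prop:regular}.
\item As recalled before Theorem~\ref{prop:b2}, every signed tree has $\dim\ker L^\pm=1$.
\item Since $D^\pm_{T_1}=D^\pm_{T_2}=-I$, there is a single net-degree class. Proposition~\ref{prop:-1} then places no restriction on $M$ beyond $M^{-1}A_{T_1}M=A_{T_2}$.
\item By Lemma~\ref{lem:orto} we may take $M$ orthogonal.
\end{itemize}

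The key step is to express $K_{T_i}$ through $A_{T_i}$, in the same way $J$ was expressed through the net-Laplacian in Theorem~\ref{prop:b2}. Since $D^\pm=-I$, we have $L^\pm_{T_i}=-I-A_{T_i}$, and $\ker L^\pm_{T_i}$ is the eigenspace of $A_{T_i}$ for the eigenvalue $-1$. This eigenspace is one-dimensional and spanned by $\mathbf 1$, because $A_{T_i}\mathbf 1=\mathbf d^\pm=-\mathbf 1$.

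Now let $P$ be the diagonal matrix with diagonal $\mathbf v_{T_i}$, that is, the bipartition switching matrix. Because the tree is bipartite, $PA_{T_i}P=-A_{T_i}$. Hence $A_{T_i}\mathbf v_{T_i}=A_{T_i}P\mathbf 1=-PA_{T_i}\mathbf 1=P\mathbf 1=\mathbf v_{T_i}$, so $\mathbf v_{T_i}$ is an eigenvector for the eigenvalue $1$. Conjugating by $P$ shows that the eigenvalue $1$ has the same multiplicity as $-1$, namely one.

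It follows that
\[
K_{T_i}=h\,E_1(A_{T_i}),
\]
where $E_1(A_{T_i})$ is the spectral projection onto the eigenvalue $1$. Equivalently, $K_{T_i}=p(A_{T_i})$ for a polynomial $p$ with $p(1)=h$ that vanishes on all other eigenvalues. Since $T_1$ and $T_2$ are cospectral, the same $p$ works for both trees. Therefore
\[
M^{-1}K_{T_1}M=p(M^{-1}A_{T_1}M)=p(A_{T_2})=K_{T_2}.
\]

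I expect this middle step to be the main obstacle. It requires verifying that the eigenvalue $1$ is simple, which rests on the bipartite sign symmetry together with $\dim\ker L^\pm=1$, and choosing $p$ uniformly for both trees. A small point to handle is that $\mathbf v_T$ is defined only up to sign, but $K_T$ is unaffected.

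With this in hand, the remaining blocks are covered by the proof of Theorem~\ref{prop:b2}:
\begin{itemize}
\item $M$ commutes with $J$, with $\mathcal E^\pm_{T_1}=\{0,\pm I\}$, and with $\mathcal J^\pm_{T_1}=\{0,\pm J\}$;
\item $M$ conjugates $\pm A_{T_1}$ to $\pm A_{T_2}$ and, by the step above, $\pm K_{T_1}$ to $\pm K_{T_2}$.
\end{itemize}
Hence $\overline{M}^{-1}A_{\overline T_1}\overline{M}=A_{\overline T_2}$.

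Finally we compare net-degrees. Row sums of each block type are preserved between the two graphs, since $A_{T_1}\mathbf 1=A_{T_2}\mathbf 1=-\mathbf 1$ and $K_{T_i}\mathbf 1=(\mathbf v_{T_i}^t\mathbf 1)\mathbf v_{T_i}$. So the net-degree vectors agree via $\mathbf d^\pm_{\overline T_2}=A_{\overline T_2}\mathbf 1=\overline{M}^{-1}A_{\overline T_1}\overline{M}\mathbf 1$. Here $\overline{M}\mathbf 1=\mathbf 1$ because $M$ commutes with $J$ and is orthogonal, so $M\mathbf 1=\pm\mathbf 1$, and we normalize the sign. This yields $\overline{M}^{-1}\mathbf d^\pm_{\overline T_1}$.

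What remains is to check that $\mathbf d^\pm_{\overline T_1}$ is fixed by $\overline M$. The contributions to it are multiples of $\mathbf 1$ and of $\mathbf v_{T_1}$ in each block. Multiples of $\mathbf 1$ are fixed by $M$, but for $\mathbf v$ we instead get $M^{-1}\mathbf v_{T_1}=\pm\mathbf v_{T_2}$. In that case the net-degrees match coordinatewise only once we observe that $\mathbf v_{T_1}^t\mathbf 1=\mathbf v_{T_2}^t\mathbf 1$. This follows from $\mathbf 1^tK_{T_1}\mathbf 1=\mathbf 1^tK_{T_2}\mathbf 1$, since $K_{T_2}=M^{-1}K_{T_1}M$ with $M$ fixing $\mathbf 1$; this forces the $\mathbf v$-contributions to coincide. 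If this bookkeeping fails, I would fall back to arguing that $\mathbf v^t\mathbf 1=0$ in these trees (balanced bipartition) and apply Proposition~\ref{prop:0} directly.
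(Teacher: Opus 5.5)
Your reduction to Theorem~\ref{prop:b2} and the key step $M^{-1}K_{T_1}M=p(A_{T_2})=K_{T_2}$ are the same as in the paper. Your proof that the eigenvalue $1$ of $A_{T_i}$ is simple takes a different, more self-contained route, and it is correct. Conjugating by the bipartition switching matrix $P$ gives $PA_{T_i}P=-A_{T_i}$, so the multiplicity of $1$ equals that of $-1$, which is $\dim\ker L^\pm_{T_i}=1$. The paper instead cites Fiedler's result that, in a tree, an eigenvalue with a nowhere-zero eigenvector is simple.

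The gap is in the last step, the comparison of net-degrees, which does not work as written.

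\begin{itemize}
\item Suppose some $\pm K$ blocks contributed a nonzero multiple $b\,\mathbf v_{T_1}$ to a block row of $\mathbf d^\pm_{\overline T_1}$. The corresponding contribution in $\overline T_2$ would then be $b\,\mathbf v_{T_2}$.
\item Knowing $\mathbf v_{T_1}^t\mathbf 1=\mathbf v_{T_2}^t\mathbf 1$ does not make $\mathbf v_{T_1}$ and $\mathbf v_{T_2}$ agree coordinatewise, because the bipartitions of the two trees need not coincide. Moreover, $\mathbf 1^tK_{T_1}\mathbf 1=\mathbf 1^tK_{T_2}\mathbf 1$ only gives that equality up to sign.
\item Worse, the net-degree would then not be constant on the blocks of $\overline M=I_q\otimes M$, where $M$ is a single $h\times h$ block. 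So Proposition~\ref{prop:0} could not be applied at all.
\end{itemize}

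What you need is exactly your ``fallback'', and you have already proved everything it requires. The vectors $\mathbf v_{T_i}$ and $\mathbf 1$ are eigenvectors of the symmetric matrix $A_{T_i}$ for the distinct eigenvalues $1$ and $-1$, hence $\mathbf v_{T_i}^t\mathbf 1=0$. Therefore $K_{T_i}\mathbf 1=0$, and the blocks $\pm K_{T_i}$ do not affect net-degrees. The net-degrees of $\overline T_1$ and $\overline T_2$ then coincide and are constant on each copy, exactly as in Theorem~\ref{prop:b2}. This is how the paper argues, and it should replace your bookkeeping paragraph.
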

\begin{proof}
Since $T_i$ is $-1$-net-regular, it is immediate to verify that
$\mathbf v_{T_i}$ is an eigenvector of $A_{T_i}$ with eigenvalue $1$,
whereas $\mathbf 1$ is an eigenvector with eigenvalue $-1$. Hence
$\mathbf v_{T_i}^t\mathbf 1=0$, so that $K_{T_i}$ has zero row sums and
the blocks $\pm K_{T_i}$ do not affect the net-degrees of
$\overline T_i$. Moreover, as already observed,
$\dim(\ker L^\pm_{T_1})=1$.
\\Hence, we are in the setting of Theorem~\ref{prop:b2}, and it remains
only to verify how $K_{T_1}$ behaves under conjugation by the matrix $M$
provided by Proposition~\ref{prop:-1}.
\\The matrix $K_{T_1}$ commutes with $A_{T_1}$, is symmetric, and its
only nonzero eigenvalue is $h$, with corresponding eigenvector
$\mathbf v_{T_1}$. Moreover, the eigenvalue $1$ of $A_{T_i}$ has
multiplicity $1$, since in a tree the multiplicity of any eigenvalue
admitting an eigenvector with no zero entries is necessarily $1$ (see
\cite[Proposition~1]{F}). Hence there exists a polynomial $p$, for
instance any polynomial satisfying $p(1)=h$ and $p(\lambda)=0$ for every
other eigenvalue $\lambda$ of $A_{T_1}$, such that $p(A_{T_1})=K_{T_1}$.
Since $A_{T_1}$ and $A_{T_2}$ are cospectral, then $p(A_{T_2})=K_{T_2}$,
and therefore $M^{-1}K_{T_1}M=K_{T_2}$.
\end{proof}
A final special case concerns unsigned graphs.
Indeed, in this setting, the null space of the Laplacian is one-dimensional
if and only if the graph is connected. If $\Gamma$ is an unsigned graph, we denote by $\Gamma^c$ its complement
graph.
\begin{corollary}\label{coro:ultimo}
Let $\Gamma_1$ and $\Gamma_2$ be two connected unsigned degree similar  graphs whose
vertices are ordered in nondecreasing order of degree. Let $\overline{\Gamma}_1$ be a signed graph
whose adjacency matrix $A_{\overline{\Gamma}_1}$ has the block structure described in
Eq.~\eqref{eq:blocchi}, with each block belonging to
$$\{\pm A_{\Gamma_1},\pm A_{\Gamma^c_1}, \pm J,O\}\cup
\mathcal{E}^\pm_{\Gamma_1}\cup\mathcal{J}^\pm_{\Gamma_1}.$$
Let $\overline{\Gamma}_2$ be the signed graph obtained from
$\overline{\Gamma}_1$ by replacing, in $A_{\overline{\Gamma}_1}$, every block of the
form $\pm A_{\Gamma_1}$ with $\pm A_{\Gamma_2}$ and every block of the form
$\pm A_{\Gamma^c_1}$ with $\pm A_{\Gamma^c_2}$. Then $\overline{\Gamma}_1$ and
$\overline{\Gamma}_2$ are net-degree similar.
\end{corollary}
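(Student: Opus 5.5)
The plan is to reduce Corollary~\ref{coro:ultimo} to Theorem~\ref{prop:b2}, exactly as was done for Corollary~\ref{coro:b1}. The only new ingredient is to control how the blocks $\pm A_{\Gamma^c_1}$ behave under conjugation by the block diagonal matrix $M$ of Proposition~\ref{prop:-1}.

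First, since $\Gamma_1$ and $\Gamma_2$ are unsigned (all-positive), degree and net-degree coincide. So $D_{\Gamma_i}=D^\pm_{\Gamma_i}$, the two graphs are net-degree similar, and $\mathcal{E}^\pm_{\Gamma_1}$, $\mathcal{J}^\pm_{\Gamma_1}$ are the sets built from the degree classes. Since $\Gamma_1$ is connected, its Laplacian $L_{\Gamma_1}=L^\pm_{\Gamma_1}$ has a one-dimensional kernel, spanned by $\mathbf 1$. We are therefore in the setting of Theorem~\ref{prop:b2}, whose proof yields an $M=\diag(M_1,\ldots,M_k)$ with
\[
M^{-1}A_{\Gamma_1}M=A_{\Gamma_2},\qquad M^{-1}D_{\Gamma_1}M=D_{\Gamma_1}=D_{\Gamma_2},\qquad MJ=JM .
\]
This $M$ commutes with every matrix in $\mathcal{E}^\pm_{\Gamma_1}\cup\mathcal{J}^\pm_{\Gamma_1}$ and with $\pm J$.

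Second, I handle the complement blocks by writing
\[
A_{\Gamma^c_i}=J-I-A_{\Gamma_i}.
\]
Conjugating by $M$ and using $MJ=JM$ gives
\[
M^{-1}A_{\Gamma^c_1}M=J-I-A_{\Gamma_2}=A_{\Gamma^c_2},
\]
and likewise for $-A_{\Gamma^c_1}$. Hence $\overline M=I_q\otimes M$ conjugates $A_{\overline{\Gamma}_1}$ blockwise into $A_{\overline{\Gamma}_2}$. In particular, $A_{\overline{\Gamma}_2}$ is the adjacency matrix of a signed graph, because it is obtained by the stated replacements.

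Third, the net-degrees must be constant on the blocks of $\overline M$ and coincide in $\overline{\Gamma}_1$ and $\overline{\Gamma}_2$, so that Proposition~\ref{prop:0} applies. The cleanest route is the argument of Theorem~\ref{prop:ns}: the net-degree vector of $\overline{\Gamma}_i$ is $A_{\overline{\Gamma}_i}\mathbf 1$. Since $M\mathbf 1=c\mathbf 1$ for a nonzero constant $c$ (constant row sums, $M$ invertible), we get $\overline M\mathbf 1=c\mathbf 1$. Then
\[
A_{\overline{\Gamma}_2}\mathbf 1=\overline M^{-1}A_{\overline{\Gamma}_1}\overline M\mathbf 1=\overline M^{-1}A_{\overline{\Gamma}_1}\mathbf 1 .
\]
It remains to check that $A_{\overline{\Gamma}_1}\mathbf 1$ is constant on each degree class within each block of vertices; then $\overline M^{-1}$ fixes it, since each $M_i$ has constant row sums $c$. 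Blockwise this is direct: $A_{\Gamma_1}\mathbf 1$ is the degree vector, $A_{\Gamma^c_1}\mathbf 1=(h-1)\mathbf 1-\mathbf d$, $J\mathbf 1=h\mathbf 1$, and the elements of $\mathcal{E}^\pm_{\Gamma_1}$ and $\mathcal{J}^\pm_{\Gamma_1}$ all give vectors constant on degree classes.

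The main obstacle I expect is this last bookkeeping step: making sure $\overline M$ fixes the net-degree vector, and that the constant $c$ (which I would normalise to $1$ by rescaling $M$) is handled correctly. Everything else follows immediately from Theorem~\ref{prop:b2}.
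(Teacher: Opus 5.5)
Your proposal is correct and follows the paper's proof: connectedness gives a one-dimensional Laplacian kernel, so Theorem~\ref{prop:b2} supplies $M$ with $MJ=JM$, and then the identity $A_{\Gamma_i^c}=J-I-A_{\Gamma_i}$ gives $M^{-1}A_{\Gamma_1^c}M=A_{\Gamma_2^c}$. Your net-degree bookkeeping through $\overline M\mathbf 1=c\mathbf 1$ is valid but roundabout. The blockwise row sums you already list ($A_{\Gamma_i}\mathbf 1=\mathbf d$, $A_{\Gamma_i^c}\mathbf 1=(h-1)\mathbf 1-\mathbf d$, and so on) show directly that the net-degrees of $\overline\Gamma_1$ and $\overline\Gamma_2$ coincide and are constant on the blocks, so $\overline M$ is not needed for that step.
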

\begin{proof}
Since $\Gamma_1$ is connected, the assumptions of
Theorem~\ref{prop:b2} are satisfied. The result then follows
from the identity $A_{\Gamma_i^c}=J-I-A_{\Gamma_i}$, for $i=1,2$.
\end{proof}

\begin{example}
Given an unsigned graph $\Gamma$ of order $h$, the \emph{complementary prism} of
$\Gamma$ is the graph obtained by taking the disjoint union of $\Gamma$ and
its complement $\Gamma^c$, and adding $h$ edges joining each vertex of
$\Gamma$ to its corresponding vertex in $\Gamma^c$ (see \cite{prism0} for the definition of the complementary prism
and \cite{prism1} for its spectral properties).
If $\Gamma_1$ and $\Gamma_2$ are two connected unsigned graphs that are
degree similar, then their respective
complementary prisms are also degree similar.
Indeed, the adjacency matrices of
the complementary prisms are
$
\begin{pmatrix}
A_{\Gamma_1} & I\\
I & A_{\Gamma_1^c}
\end{pmatrix}$
and $
\begin{pmatrix}
A_{\Gamma_2} & I\\
I & A_{\Gamma_2^c}
\end{pmatrix},
$
so that
Corollary~\ref{coro:ultimo} applies.
\end{example}

\section*{Acknowledgements and declarations}
The authors acknowledge the support of the INdAM-GNSAGA project \lq\lq Between groups and graphs\rq\rq, CUP E53C25002010001. This research was partially supported by the Engineering Department of Cusano University of Rome.

\end{document}